\pdfoutput=1
\documentclass[12pt]{amsart}
\usepackage{amsmath,amssymb,tikz,pgfplots,mathrsfs,framed,pgf,latexsym,amsthm,footnpag,url,multicol}
\usepackage{mathtools}
\usetikzlibrary{cd}
\usepackage[colorlinks,breaklinks=true]{hyperref}
\usepackage[figure,table]{hypcap}
\hypersetup{
bookmarksnumbered,
pdfstartview={FitH},
citecolor={black},
linkcolor={black},
urlcolor={black},
pdfpagemode={UseOutlines}
}
\usepackage{cleveref}
\usepackage{relsize}
\usepackage[bbgreekl]{mathbbol}
\DeclareSymbolFontAlphabet{\mathbbl}{bbold}
\DeclareSymbolFontAlphabet{\mathbb}{AMSb}

\newtheorem{theorem}{Theorem}
\crefname{theorem}{Theorem}{Theorems}
\newtheorem{lemma}[theorem]{Lemma}
\crefname{lemma}{Lemma}{Lemmas}

\crefname{corollary}{Corollary}{corollaries}
\newtheorem{proposition}[theorem]{Proposition}
\crefname{proposition}{Proposition}{Propositions}
\theoremstyle{definition}
\newtheorem{definition}[theorem]{Definition}
\crefname{definition}{Definition}{Definitions}
\newtheorem{remark}[theorem]{Remark}
\crefname{remark}{Remark}{Remarks}

\crefname{example}{Example}{Examples}

\crefname{conjecture}{Conjecture}{Conjectures}

\crefname{question}{Question}{Questions}
\numberwithin{equation}{theorem}

\crefname{claim}{Claim}{Claims}

\renewcommand{\inf}{\mathrm{inf}}
\newcommand{\AKp}{\mathbb{A}_K^{+}}
\newcommand{\AK}{\mathbb{A}_K}
\newcommand{\AFp}{\mathbb{A}_F^{+}}
\newcommand{\AF}{\mathbb{A}_F}
\newcommand{\ALp}{\mathbb{A}_L^{+}}
\newcommand{\AL}{\mathbb{A}_L}
\newcommand{\AEp}{\mathbb{A}_E^{+}}
\renewcommand{\AE}{\mathbb{A}_E}

\newcommand{\BK}{\mathbb{B}_K}

\newcommand{\BE}{\mathbb{B}_E}
\DeclareMathOperator{\Gal}{Gal}
\DeclareMathOperator{\Frac}{Frac}
\subjclass[2020]{11S23, 11F80, 11F85}
\pgfplotsset{compat=1.18}
\begin{document}
\itemsep0pt
\title{A remark on an integral structure of the imperfect coefficient ring of $(\varphi,\Gamma)$-modules}
\author{Takumi Watanabe}
\address{Graduate School of Mathematical Sciences, The University of Tokyo, 3-8-1 Komaba, Meguro-ku, Tokyo, 153-8914, Japan}
\email{takumi0426@g.ecc.u-tokyo.ac.jp}
\maketitle
\begin{abstract}
Let $K$ be a complete discrete valuation field of characteristic $0$ with perfect residue field of characteristic $p>0$. Let $\AK$ denote the imperfect coefficient ring of $(\varphi,\Gamma)$-modules defined by Jean-Marc Fontaine. We prove that the canonical map $W(k_{K_\infty})[[\mu]]\rightarrow \AK\cap A_\inf$ is an isomorphism, even when $K$ is ramified. 
This fact was remarked by Nathalie Wach without proof. In Appendix 2, we include a result of Dylan Pentland. Both results indicate the difficulty of constructing a coefficient ring of ``Wach modules'' in the ramified case.
\end{abstract}
\tableofcontents
\section*{Introduction}

In \cite{Fontaine1990}, Jean-Marc Fontaine constructed a theory of $(\varphi,\Gamma)$-modules in order to study $p$-adic Galois representations. He constructed a subring $\AK \subseteq W(\widehat{K_\infty}^\flat)$ that is stable under the Frobenius endomorphism and the $\Gamma_K$-action, which was denoted by $\mathcal{O}_{\mathcal{E}}$ in \cite{Fontaine1990}. When $K$ is absolutely unramified, $\AK=\bigl(W(k)[[\mu]][1/\mu]\bigr)^\wedge_p$. A \emph{free \'{e}tale $(\varphi,\Gamma)$-module over $\AK$} is a finite free $\AK$-module $M$ equipped with a continuous semi-linear $\Gamma_K$-action and a $\Gamma_K$-equivariant $\AK$-linear isomorphism $M\otimes_{\AK, \varphi}\AK \cong M$. J.-M. Fontaine showed that the category of free \'{e}tale $(\varphi,\Gamma)$-modules over $\AK$ is equivalent to the category of free $\mathbb{Z}_p$-representations of $G_K$. Here, a \emph{free $\mathbb{Z}_p$-representation of $G_K$} is a finite free $\mathbb{Z}_p$-module equipped with a continuous linear $G_K$-action. As $\Gamma_K$ can be embedded into $\mathbb{Z}_p^\times$ by the cyclotomic character, the $\Gamma_K$-action is simpler than that of $G_K$. Therefore, the theory of $(\varphi,\Gamma)$-modules is very important and useful.

Among $p$-adic Galois representations, crystalline representations are one of the important classes. Thus, it is important to characterize the $(\varphi,\Gamma)$-modules corresponding to crystalline representations. After innovative works by J.-M. Fontaine \cite{Fontaine1990}, Nathalie Wach \cite{Wach1996,Wach1997}, and Pierre Colmez \cite{Colmez1999}, Laurent Berger constructed a theory of Wach modules in \cite{berger2004} when $K$ is absolutely unramified. A \emph{Wach module} is a finite free $\AKp:=W(k)[[\mu]]$-module $N$ equipped with a continuous semi-linear $\Gamma_K$-action that is trivial on $N/\mu$ and a $\Gamma_K$-equivariant $\AKp[1/\varphi(\xi)]$-linear isomorphism $N \otimes_{\AKp,\varphi}\AKp[1/\varphi(\xi)]\cong N[1/\varphi(\xi)]$. Here, $\varphi(\xi):= \varphi(\mu)/\mu \in \AKp$.
For any Wach module $N$, $N\otimes_{\AKp}\AK$ is the $(\varphi,\Gamma)$-module over $\AK$ corresponding to the crystalline $\mathbb{Z}_p$-representation. Hence, Wach modules can be regarded as a special lattice of $(\varphi,\Gamma)$-modules.
L. Berger showed that the category of Wach modules is equivalent to the category of free crystalline $\mathbb{Z}_p$-representations of $G_K$. Since then, Wach modules have been used not only in the study of $p$-adic Galois representations, but also, for example, in the study of Iwasawa theory and $p$-adic Langlands correspondence.

However, Wach modules can be used only when $K$ is absolutely unramified. This is because a good coefficient ring is unknown when $K$ is ramified. The coefficient ring should be a ``good integral subring'' of $\AK$ stable under the Frobenius endomorphism and the $\Gamma_K$-action. A good candidate might be $\AK\cap A_\inf$. When $K$ is absolutely unramified, it coincides with $\AKp=W(k)[[\mu]]$. However, N. Wach remarked in \cite[REMARQUE on page 393]{Wach1996} without proof that even if $K$ is ramified, we have an equality 
\begin{align*}
    W(k_{K_\infty})[[\mu]]= \AK \cap A_\inf \quad \text{in } W(\mathbb{C}_p^\flat),
\end{align*}
where $k_{K_\infty}$ is the residue field of $K_\infty$. Since the left-hand side does not have any information about the ramification of $K$, this fact implies that we cannot use $\AK\cap A_\inf$ as a coefficient ring of ``Wach modules'' when $K$ is ramified. We give a proof of the equality in this paper.

Another candidate for a good coefficient ring might be the $p$-adic completion of the integral closure of $\AFp$ in $\AK$. In Appendix 1, we explain why this does not work well.
In Appendix 2, we include the following result of Dylan Pentland. 

\begin{proposition}[{\cref{prop:Deylan Pentland}}]
Let $K= \mathbb{Q}_p(p^{1/e})$ with $e\geq 2$ and $\gcd(e, p(p-1))=1$. Then, there is no subring $R$ of $\mathbb{A}_K$ which is stable under the Frobenius endomorphism and with the property that $R/p \simeq \mathbb{E}_K^+ \subset \mathbb{E}_K$ via the natural map $R/p \rightarrow \mathbb{A}_K/p=\mathbb{E}_K$.
\end{proposition}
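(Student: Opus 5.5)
The plan is to show that any such $R$ must lie inside $A_\inf$. Then the main theorem of this paper, $\AK\cap A_\inf=W(k_{K_\infty})[[\mu]]$, forces the reduction of $R$ into $\mathbb{E}_K$ to land in $\mathbb{E}_F^+$, which is too small.

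First I record the arithmetic input. For $K=\mathbb{Q}_p(p^{1/e})$ with $\gcd(e,p(p-1))=1$, the extensions $K/\mathbb{Q}_p$ and $\mathbb{Q}_p(\mu_{p^\infty})/\mathbb{Q}_p$ are both totally ramified. Since $\gcd(e,p(p-1))=1$ they are linearly disjoint, so $K_\infty/F_\infty$ is totally (tamely) ramified of degree $e$ and $k_{K_\infty}=\mathbb{F}_p$. Consequently $\mathbb{E}_K/\mathbb{E}_F$ is totally ramified of degree $e\geq 2$, and $\mathbb{E}_K^+\supsetneq \mathbb{E}_F^+=\mathbb{F}_p[[\bar\mu]]$. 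For example, a uniformizer of $\mathbb{E}_K$ is not in $\mathbb{E}_F$.

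Next I show $R\subseteq A_\inf$. Suppose $R\subseteq\AK$ is $\varphi$-stable with $R/p\cong\mathbb{E}_K^+$ via the natural map. Since $\AK$ is $p$-torsion free and injectivity of $R/p\to\AK/p$ means $R\cap p\AK=pR$, the $p$-adic completion $\widehat R$ embeds in $\AK\subseteq W(\mathbb{C}_p^\flat)$ compatibly with $\varphi$. This embedding uses that $\AK$ is $p$-adically complete and that the $p$-adic topology of $\AK$ restricts to that of $R$. Because $\varphi$ on $\AK$ lifts the $p$-power map and everything is $p$-torsion free, $\widehat R$ is a $p$-complete $\delta$-ring with $\widehat R/p\cong\mathbb{E}_K^+$. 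Its perfection (colimit along $\varphi$, then $p$-completed) is a $p$-complete perfect $\delta$-ring, hence equal to $W\bigl((\mathbb{E}_K^+)^{\mathrm{perf}}\bigr)$. This maps to $W(\mathbb{C}_p^\flat)$ as the Witt vector functoriality of the inclusion $(\mathbb{E}_K^+)^{\mathrm{perf}}\subseteq\mathcal{O}_{\mathbb{C}_p^\flat}$. Hence $R\subseteq W(\mathcal{O}_{\mathbb{C}_p^\flat})=A_\inf$. This step is where I expect the care to be needed. One must check that the map $W\bigl((\mathbb{E}_K^+)^{\mathrm{perf}}\bigr)\to W(\mathbb{C}_p^\flat)$ obtained from the universal property really agrees with the given inclusion on $\widehat R$. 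That follows from uniqueness of $\delta$-maps out of Witt vectors of perfect rings, together with the fact that $W(\mathbb{C}_p^\flat)$ is perfect with $\varphi$ bijective. I will also double-check that $p$-adic completion does not leave $\AK$.

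Finally, combining $R\subseteq A_\inf$ with the main theorem gives $R\subseteq\AK\cap A_\inf=W(\mathbb{F}_p)[[\mu]]=\mathbb{Z}_p[[\mu]]$. The map $R/p\to\AK/p=\mathbb{E}_K$ therefore factors through $\mathbb{Z}_p[[\mu]]/p=\mathbb{F}_p[[\bar\mu]]=\mathbb{E}_F^+$. So its image lies in $\mathbb{E}_F^+\subsetneq\mathbb{E}_K^+$, contradicting $R/p\simeq\mathbb{E}_K^+$ via this map. Thus no such $R$ exists.
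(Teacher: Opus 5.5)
Your proof is correct, but it takes a genuinely different route from the paper's.

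\textbf{The paper's route.} The paper's (Pentland's) argument is a direct computation. It uses the explicit Kummer description $\mu_K^e=\mu$ and $\varphi(\mu_K)^e=(1+\mu_K^e)^p-1$, which is where $\gcd(e,p(p-1))=1$ enters. It then shows that for any lift $x=\mu_K+pf\in R$ of $\pi_K$, the element $\delta(x)\in R$ reduces modulo $p$ to an expression containing the term $e^{-1}\pi_K^{e-p(e-1)}$. This exponent is negative and prime to $p$, so $\varphi(f)$ cannot cancel it, and hence $\delta(x)\bmod p\notin\mathbb{E}_K^+$.

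\textbf{Your route.} You first prove a structural fact: a $\varphi$-stable subring $R\subseteq W(\mathbb{C}_p^\flat)$ with $R\cap pW(\mathbb{C}_p^\flat)=pR$ and $R/p\subseteq\mathcal{O}_{\mathbb{C}_p}^\flat$ automatically lies in $A_\inf$. Your perfection argument for this is sound. The map out of $W((\mathbb{E}_K^+)^{\mathrm{perf}})$ exists because $\varphi$ is bijective on $W(\mathbb{C}_p^\flat)$. Ring maps from Witt vectors of a perfect ring into a $p$-complete ring are determined modulo $p$. The detour through $\widehat R$ is not actually needed. You then apply \cref{thm:main theorem}, which is legitimate because its proof does not use Appendix 2.

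\textbf{How the two relate.} The two arguments are two faces of one phenomenon. Lying in $A_\inf$ is equivalent to all iterated $\delta$'s reducing into $\mathcal{O}_{\mathbb{C}_p}^\flat$. The paper explicitly exhibits the first $\delta$ that fails to do so.

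\textbf{What each approach buys.} The paper's route gives independence: it is elementary and avoids Wach's theorem and the $p$-adic Hodge theory behind the main theorem, so it serves as separate evidence, as the paper intends. Your route gives generality. It shows that for every $p$-adic field $K$, any $\varphi$-stable $R\subseteq\AK$ with $R/p\hookrightarrow\mathbb{E}_K^+$ lies in $W(k_{K_\infty})[[\mu]]$. Consequently an $R$ with $R/p\cong\mathbb{E}_K^+$ can exist only if $\mathbb{E}_K^+=k_{K_\infty}[[\epsilon-1]]$, i.e.\ only if $K_\infty=F'_\infty$ for $F'=W(k_{K_\infty})[1/p]$. In your argument the hypothesis $\gcd(e,p(p-1))=1$ serves only to guarantee $K_\infty\neq F'_\infty$, which your degree computation verifies.
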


This result also indicates the difficulty of constructing a coefficient ring of ``Wach modules'' in the ramified case.

\subsection*{Acknowledgements}
This paper is based on the great ideas of Laurent Berger and Takeshi Tsuji. 
I would like to express my sincere gratitude to Laurent Berger for providing the basic ideas of the proof and for the useful comments on the draft. Especially, the idea of regarding $\AK \cap A_\inf$ as a kind of Wach module over $\AFp$ is due to him.
I am also deeply grateful to my advisor, Takeshi Tsuji, for giving me essential ideas of the proof and for his valuable comments on the draft. In particular, he provided the key ideas for proving \cref{prop:weaker main theorems (1)}.
I believe that both of them deserve to be authors of this paper. However, they politely declined. 
All mistakes in this paper are entirely my own. 
It was Abhinandan who first informed me of \cite[REMARQUE on page 393]{Wach1996}. I thank him for this and for giving me helpful comments on the draft.
I would also like to thank Dylan Pentland for allowing me to include his great discovery as an appendix and for the useful discussion on the contents of Appendix 1.
This work was supported by the WINGS-FMSP program at Graduate School of Mathematical Sciences, the University of Tokyo. This work was partially supported by JSPS KAKENHI Grant Number 25KJ1165.

\subsection*{Notation and Conventions}
Throughout this paper, we fix a complete discrete valuation field $F$ of characteristic $0$ with perfect residue field $k_F$ of characteristic $p>0$ that is absolutely unramified, i.e., $p$ is a uniformizer. We also fix an algebraic closure $\overline{F}$ of $F$ and let $\mathbb{C}_p$ denote the completion of $\overline{F}$. We fix a compatible system $\{\zeta_{p^n}\}_{n\geq 1}$ of primitive $p$-power roots of unity in $\overline{F}$.
In this paper, a \emph{$p$-adic field} means a finite extension of $F$ in $\overline{F}$, which is a complete discrete valuation field of characteristic $0$ with perfect residue field of characteristic $p$.
For any $p$-adic field $K$, we set $K_n:=K(\zeta_{p^n})$ for any positive integer $n\geq 1$ and $K_\infty:=\bigcup_{n\geq 1}K_n$. 
Let $G_K$ denote the absolute Galois group $\Gal \left(\overline{F}/K\right)$ of $K$. We also set $H_K:=\Gal \left(\overline{F}/K_\infty\right)$ and $\Gamma_K:=\Gal \left(K_\infty/K\right)$. For any subfield $L\subseteq \mathbb{C}_p$, $\mathcal{O}_L$ denotes the ring of integers and $k_L$ denotes the residue field of $L$. 
For any $\mathbb{F}_p$-algebra $R$, $W(R)$ denotes the ring of $p$-typical Witt vectors of $R$.
We set $A_\inf:=W(\mathcal{O}_{\mathbb{C}_p}^{\flat})$, where $\mathcal{O}_{\mathbb{C}_p}^{\flat}$ is the tilt $\varprojlim_{x\mapsto x^p}\mathcal{O}_{\mathbb{C}_p}/p$ of $\mathcal{O}_{\mathbb{C}_p}$. Let $\mathbb{C}_p^\flat$ denote the field of fractions of $\mathcal{O}_{\mathbb{C}_p}^\flat$. Similarly, we define $\mathcal{O}_{\widehat{K_\infty}}^\flat := \varprojlim_{x\mapsto x^p} \mathcal{O}_{ \widehat{K_\infty}}/p$ and $\widehat{K_\infty}^\flat:= \Frac \mathcal{O}_{\widehat{K_\infty}}^\flat$ for any $p$-adic field $K$. We put $\epsilon:=(1,\zeta_p,\zeta_{p^2},\dots )\in \mathcal{O}_{\widehat{F_\infty}}^{\flat}$ and $\mu:=[\epsilon]-1\in W(\mathcal{O}_{\widehat{F_\infty}}^\flat)$.

\section*{Proof of the Main Theorem}
For any $p$-adic field $K$, let $\AK$ denote the imperfect coefficient ring of $(\varphi,\Gamma)$-modules defined by J.-M. Fontaine in \cite{Fontaine1990}, which was denoted by $\mathcal{O}_{\mathcal{E}}$. For the sake of the reader, we recall the definition here.
We set $\mathbb{E}_F:= k_F((\epsilon-1))\subseteq \mathbb{C}_p^\flat$. It is a complete discrete valuation field of characteristic $p$. Since $\mathbb{C}_p^\flat$ is algebraically closed, we may consider the separable closure $\mathbb{E}$ of $\mathbb{E}_F$ inside $\mathbb{C}_p^\flat$. 
We see that $\mathbb{E}$ is stable under the canonical $G_F$-action on $\mathbb{C}_p^\flat$. Moreover, $H_F$ acts on $\mathbb{E}_F$ trivially, so we have a homomorphism $H_F \rightarrow \Gal \left(\mathbb{E}/\mathbb{E}_F\right)$. By the theorem of Fontaine-Wintenberger (\cite[Th\'{e}or\`{e}me 3.1.6]{Fontaine1990}, \cite{Wintenberger1983}), it is an isomorphism. For any $p$-adic field $K$, we define $\mathbb{E}_K:= \mathbb{E}^{H_K}$. Then, $\mathbb{E}_K$ is a complete discrete valuation field with residue field $k_{K_\infty}$, and it is a finite extension of $\mathbb{E}_F$ of degree $[K_\infty: F_\infty]$. We define $\mathbb{E}_K^+:=\mathbb{E}_K \cap \mathcal{O}_{\mathbb{C}_p}^\flat$. 
One can show that $\mathbb{E}_K^+ = k_{K_\infty}[[\widetilde{\pi}_{K}]]$, where $\widetilde{\pi}_{K}$ is a uniformizer of $\mathbb{E}_K$. 
For any finite unramified extension $K$ of $F$, we define $\AK$ as follows. Consider a $W(k_K)$-linear homomorphism $W(k_K)[T]\rightarrow W(\mathcal{O}_{\widehat{K_\infty}}^\flat)$ defined by $T\mapsto \mu$, where $W(k_K)[T]$ is the polynomial ring over $W(k_K)$ in one variable $T$. Since $\mu^n \rightarrow 0$ in $W(\mathcal{O}_{\widehat{K_\infty}}^\flat)$ as $n \rightarrow \infty$ with respect to the weak topology, we obtain a homomorphism $W(k_K)[[T]]\rightarrow W(\mathcal{O}_{\widehat{K_\infty}}^\flat)$ of $W(k_K)$-algebras. Since $\mu$ is invertible in $W(\widehat{K_\infty}^\flat)$ and $W(\widehat{K_\infty}^\flat)$ is $p$-adically complete and separated, we obtain a homomorphism 
\begin{align*}
    \bigl(W(k_K)[[T]][1/T]\bigr)^\wedge_p \rightarrow W(\widehat{K_\infty}^\flat).
\end{align*}
It is injective because its reduction modulo $p$ is equal to the canonical injection $k_K ((\epsilon-1))\hookrightarrow \widehat{K_\infty}^\flat$, and the source and the target rings are $p$-adically complete and separated and $p$-torsion free. We define $\AK$ as the image of this homomorphism.
The following lemma is well-known. 

\begin{lemma}\label{lem:main thm for unramified extension}
     For any finite unramified extension $K$ of $F$, we define $\AKp:=\AK \cap A_\inf$. Then, we have
     \begin{align*}
          \AKp = W(k_K)[[\mu]].
     \end{align*}
     Here, the right-hand side is defined as the image of the injective $W(k_K)$-algebra homomorphism $W(k_{K})[[T]] \lhook\joinrel\rightarrow A_\inf$, $T\mapsto \mu$, where the limit is taken with respect to the weak topology.
     \end{lemma}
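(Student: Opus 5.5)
Both inclusions are needed. The inclusion $W(k_K)[[\mu]]\subseteq \AK\cap A_\inf$ is immediate from the construction. The map $W(k_K)[[T]]\to W(\mathcal{O}_{\widehat{K_\infty}}^\flat)\subseteq A_\inf$ factors through $\bigl(W(k_K)[[T]][1/T]\bigr)^\wedge_p$, so its image lies in $\AK$ and, by construction, in $A_\inf$. Injectivity of $W(k_K)[[T]]\to A_\inf$ follows from the injectivity of the map from $\bigl(W(k_K)[[T]][1/T]\bigr)^\wedge_p$, since $W(k_K)[[T]]$ embeds in that ring. So the content is the reverse inclusion.

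For the reverse inclusion, identify $\AK$ with $\bigl(W(k_K)[[T]][1/T]\bigr)^\wedge_p$. An element $x\in\AK$ is then a Laurent series $x=\sum_{n\in\mathbb{Z}} a_n\mu^n$ with $a_n\in W(k_K)$ and $a_n\to 0$ $p$-adically as $n\to-\infty$. Write $x=x^+ + x^-$, where $x^+=\sum_{n\ge 0}a_n\mu^n\in W(k_K)[[\mu]]\subseteq A_\inf$ and $x^-=\sum_{n<0}a_n\mu^n$. If $x\in A_\inf$, then $x^-\in A_\inf$, and we must show $x^-=0$. Suppose not. Since $A_\inf$ is $p$-torsion free, $p$-adically separated, and $p$-saturated in $W(\mathbb{C}_p^\flat)$ (that is, $pA_\inf = A_\inf\cap pW(\mathbb{C}_p^\flat)$, as one checks on Witt components), we may divide $x^-$ by the largest power of $p$ dividing all $a_n$ with $n<0$. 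This power exists because $a_n\to 0$ and $x^-\neq 0$, and the quotient still lies in $A_\inf$. So we may assume some $a_n$ with $n<0$ is a unit.

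Now reduce modulo $p$. The image of $x^-$ lies in $A_\inf/p=\mathcal{O}_{\mathbb{C}_p}^\flat$. In $\widehat{K_\infty}^\flat$ this image equals $\sum_{n<0}\bar a_n(\epsilon-1)^n$, which is a finite sum because $a_n\to0$. It is a nonzero Laurent polynomial in $(\epsilon-1)^{-1}$ with no constant term and no positive powers. Since $v(\epsilon-1)>0$, its valuation is $-N\,v(\epsilon-1)<0$, where $-N$ is the most negative index with $\bar a_{-N}\ne0$: the lowest-degree term dominates strictly, and $\bar a_n\in k_K$ is either zero or a unit. This contradicts integrality, so $x^-=0$ and $x=x^+\in W(k_K)[[\mu]]$.

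The main obstacle is justifying the two structural facts used: the Laurent-series description of elements of $\AK$, and the saturation $pA_\inf=A_\inf\cap pW(\mathbb{C}_p^\flat)$. The Laurent-series description comes directly from the definition as a $p$-adic completion together with the injectivity established before the statement. The saturation reduces to the fact that $p$-divisibility of a Witt vector $(x_0,x_1,\dots)$ over a perfect ring means $x_0=0$, after which division by $p$ takes the $p$-th roots of the remaining components; $p$-th roots of integral elements remain integral. I would verify these two points carefully and then run the valuation argument above.
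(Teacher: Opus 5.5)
Your proof is correct and follows essentially the same route as the paper's sketch. You isolate the negative-power part $\sum_{n<0}a_n\mu^n$, use $p$-saturation of $A_\inf$ in $W(\mathbb{C}_p^\flat)$ to arrange that some coefficient is a unit, then reduce modulo $p$ and derive a contradiction from the valuation of the most negative power of $\epsilon-1$. The only difference is cosmetic: you check the saturation directly on Witt components, whereas the paper gets it from $p$-torsion-freeness of $W(\mathbb{C}_p^\flat)/A_\inf$ via the snake lemma.
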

\begin{proof}[Sketch of a proof]
     For the sake of the reader, we write a sketch of a proof here. 
     It is easy to see that $W(k_K)[[\mu]]\subseteq \AKp$.
     Suppose $W(k_K)[[\mu]] \subsetneq \AKp$. We have an equality  
     \begin{align*}
          \AK=\left\{\lim_{N\to \infty } \sum_{i=-N}^\infty a_i \mu^i\in W(\mathbb{C}_p^\flat)\middle|\right.\begin{split} a_i\in W(k_K), a_i \rightarrow 0\text{ as }  i \rightarrow -\infty \\
          \text{ with respect to the $p$-adic topology}\end{split}\Biggr\}.
     \end{align*}
     Here, the limit $\sum_{i=-N}^\infty$ is taken with respect to the weak topology and the limit $ \lim_{N\to \infty}$ is taken with respect to the $p$-adic topology. 
     Then, there exists a non-zero element 
     \begin{align*}
          \lim_{N\to \infty} \sum_{i=-N}^{-1}a_i \mu^i\in \AKp
     \end{align*}
     with $a_i\in W(k_K)$. Note that for every $x\in\AK$, $px\in\AKp$ implies $x\in \AKp$ as $W(\mathbb{C}_p^\flat)/A_\inf$ is $p$-torsion free, which can be proved by the injectivity of $\mathcal{O}_{\mathbb{C}_p}^\flat \rightarrow \mathbb{C}_p^\flat$, $p$-torsion freeness of $W(\mathbb{C}_{p}^\flat)$, and the snake lemma. Thus, we may assume that there exists $N\geq 1$ such that $a_N\notin pW(k_K)$. Let $N$ be the maximal integer such that $a_{-N}\notin pW(k_K)$. Then
     \begin{align*}
          \sum_{i=-N}^{-1}\overline{a_i} \widetilde{\pi}^i\in \mathcal{O}_{\mathbb{C}_p}^\flat
     \end{align*}
     where $\widetilde{\pi}:=\epsilon-1$ and $\overline{a_i}\in k_K$ is $a_i \bmod p$. We can deduce a contradiction by using the valuation.
\end{proof}

Let us consider a $p$-adic field $K$. Let $E$ be the maximal unramified extension in $K_\infty/F$. Then $E$ is a finite extension of $F$ and $E_\infty \subseteq K_\infty$. 
By the definition of $E$, $K_\infty/E_\infty$ is totally ramified, i.e., the residue field of $K_\infty$ coincides with that of $E_\infty$, which is also equal to that of $E$. Thus, $\mathbb{E}_K/\mathbb{E}_E$ is a totally ramified extension. 
We fix a uniformizer $\widetilde{\pi}_K\in \mathbb{E}_K^+$ of $\mathbb{E}_K$ and let $\overline{P}(T)\in \mathbb{E}_E^+[T]$ be the minimal polynomial of $\widetilde{\pi}_K$ over $\mathbb{E}_E$. Note that $\overline{P}(T)$ is an Eisenstein polynomial and $\AEp/p \cong \mathbb{E}_E^+$ by \cref{lem:main thm for unramified extension}. Let $P(T)\in \AEp[T]$ be any lift of $\overline{P}(T)\in \mathbb{E}_E^+[T]$. Since $\overline{P}(T)\in \mathbb{E}_E[T]$ is separable, $\overline{P}'(\widetilde{\pi}_K)\in \mathbb{E}_K^\times\subseteq \widehat{K_\infty}^{\flat,\times}$. By applying Hensel's lemma to $P(T)\in W(\widehat{K_\infty}^\flat)[T]$, we obtain a root $\pi_K\in W(\widehat{K_\infty}^\flat)$ of $P(T)$ such that $\pi_K\bmod p =\widetilde{\pi}_K$. Note that $\pi_K$ is algebraic over $\AE[1/p]$. We can construct a $W(k_{K_\infty})$-linear injective homomorphism
\begin{align*}
     \Bigl(W(k_{K_\infty})[[T]][1/T]\Bigr)^\wedge_p\lhook\joinrel\rightarrow W(\widehat{K_\infty}^\flat),\qquad T\mapsto \pi_K
\end{align*}
in the same manner as before. We define $\AK$ as the image of this homomorphism. Then, $\AK$ is characterized by the following properties:
\begin{itemize}
    \item $\AK$ is a subring of $W(\mathbb{C}_p^\flat)$.
    \item $\AK$ is a Cohen ring of $\mathbb{E}_K$ with an isomorphism $\AK/p \cong \mathbb{E}_K$ induced by $\AK \hookrightarrow W(\mathbb{C}_p^\flat)$.
    \item $\AK$ contains $\AF$ as a subring and the canonical homomorphism $\AF \rightarrow \AK$ is \'{e}tale.
\end{itemize}
From this characterization, we see that $\AK$ is a subring of $W(\widehat{K_\infty}^\flat)$ stable under the Frobenius endomorphism and the $\Gal \left(K_\infty/F\right)$-action. We define $\mathbb{A}$ as the $p$-adic completion of $\bigcup_{K/F \text{:finite}} \mathbb{A}_K \subseteq W(\mathbb{C}_p^\flat)$, which is a subring of $W(\mathbb{C}_p^\flat)$ stable under the Frobenius endomorphism and the $G_F$-action. We set $\mathbb{B}_K:= \mathbb{A}_K[1/p]$, $\mathbb{B}:=\mathbb{A}[1/p]$. It is known that $\mathbb{A}_K=\mathbb{A}^{H_K}$ and $\mathbb{B}_K=\mathbb{B}^{H_K}$.

\begin{remark}\label{rem:definition of A_K}
The ring $\AK$ is independent of $F$ and it depends only on $\overline{F}$, which is an algebraic closure of $K$, in the following sense: For another absolutely unramified complete discrete valuation field $F'$ with perfect residue field in $\overline{F}$ such that $K$ is a finite extension of $F'$, the ring $\AK$ defined using $F'$ coincides with the ring $\AK$ defined using $F$. This can be shown by using the characterization of $\AK$ mentioned above. 
\end{remark}

\begin{definition}
For any $p$-adic field $K$, we define 
\begin{align*}
     \AKp:=\AK\cap A_\inf
\end{align*}
in $W(\mathbb{C}_p^\flat)$.
\end{definition}

The next proposition follows immediately from the definition, but it is important.

\begin{proposition}\label{prop:subextension of p-cyclotomic extension}
Let $K$ be a $p$-adic field and let $L$ be a finite extension in $K_\infty/K$. Then we have $\AK=\AL$ and $\AKp=\ALp$.
\end{proposition}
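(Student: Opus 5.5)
The plan is to reduce everything to the equality $L_\infty=K_\infty$. Once $\AK=\AL$ is established, $\AKp=\ALp$ follows at once, because both are defined as the intersection of the same ring with $A_\inf$ inside $W(\mathbb{C}_p^\flat)$. So the work lies in the first equality.

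First, since $K\subseteq L\subseteq K_\infty$, we get $L_n=L(\zeta_{p^n})\subseteq K_\infty$ for every $n$, hence $L_\infty\subseteq K_\infty$. Conversely, $K\subseteq L$ gives $K_n\subseteq L_n$, so $K_\infty\subseteq L_\infty$. Therefore $L_\infty=K_\infty$, and in particular $H_L=\Gal(\overline{F}/L_\infty)=\Gal(\overline{F}/K_\infty)=H_K$.

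Next I would use the fact recalled just before \cref{rem:definition of A_K}, namely $\mathbb{A}_K=\mathbb{A}^{H_K}$ for every $p$-adic field $K$. Here $\mathbb{A}$ is the $p$-adic completion of $\bigcup_{K'/F\text{ finite}}\mathbb{A}_{K'}$ inside $W(\mathbb{C}_p^\flat)$, which depends only on $F$ and $\overline{F}$. Since $L$ is a finite extension of $K$, hence of $F$, the same $F$ can be used for both $K$ and $L$. We then obtain $\AK=\mathbb{A}^{H_K}=\mathbb{A}^{H_L}=\AL$.

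If one prefers to avoid quoting $\mathbb{A}_K=\mathbb{A}^{H_K}$, one can argue instead from the characterization of $\AK$: it is the unique subring of $W(\mathbb{C}_p^\flat)$ that contains $\AF$ as an étale extension and is a Cohen ring lifting $\mathbb{E}_K$ compatibly with the embedding. We have $\mathbb{E}_K=\mathbb{E}^{H_K}=\mathbb{E}^{H_L}=\mathbb{E}_L$. The only subtle point is the uniqueness of such an étale lift inside $W(\mathbb{C}_p^\flat)$, which follows from Hensel's lemma applied to the lift $\pi_K$ of a uniformizer together with the maximal unramified subextension $E$ (the same for $K$ and $L$, since it is determined by $K_\infty=L_\infty$). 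This is the only step requiring any care, and it is handled by the cited equality $\mathbb{A}_K=\mathbb{A}^{H_K}$.
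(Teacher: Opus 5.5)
Your proposal is correct and is essentially the paper's argument (the paper only says it ``follows immediately from the definition''). The key point in both is $L_\infty=K_\infty$, so $H_L=H_K$; hence every ingredient of the construction of $\AK$, or equivalently $\mathbb{A}^{H_K}$, coincides with the corresponding one for $L$, giving $\AK=\AL$, and intersecting with $A_\inf$ gives $\AKp=\ALp$. Here ``every ingredient'' means $\mathbb{E}_K=\mathbb{E}^{H_K}$, the residue field $k_{K_\infty}$, and the maximal unramified subextension of $K_\infty/F$. Your fallback paragraph via the characterization is not needed, but it does no harm.
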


The following is the main theorem in this paper.

\begin{theorem}\label{thm:main theorem}
For any $p$-adic field $K$, we have
\begin{align*}
    \AKp=W(k_{K_\infty})[[\mu]].
\end{align*}
\end{theorem}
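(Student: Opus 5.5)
The plan is to reduce to an unramified base and then show that nothing in $\AK\cap A_\inf$ lies outside $\AEp$. Let $E$ be the maximal unramified subextension of $K_\infty/F$, as above. Then $k_{K_\infty}=k_E$, and by \cref{lem:main thm for unramified extension} we have $\AEp=W(k_E)[[\mu]]$. Since $\AE\subseteq\AK$, this gives the easy inclusion $W(k_{K_\infty})[[\mu]]=\AEp\subseteq\AKp$. By \cref{prop:subextension of p-cyclotomic extension} we may replace $K$ by any finite subextension of $K_\infty/K$, so we may assume $E\subseteq K$ and $[K_\infty:E_\infty]=[K:E]=:d$. It then remains to prove $\AKp\subseteq\AEp$; we may assume $d>1$.

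\textbf{Step 1: $\AKp$ is finite over $\AEp$ and embeds mod $p$ into $\mathbb{E}_K^+$.} Since $W(\mathbb{C}_p^\flat)/A_\inf$ is $p$-torsion free (as in the sketch of \cref{lem:main thm for unramified extension}), we get $\AKp\cap p\AK=p\AKp$. Hence $\AKp/p\hookrightarrow \AK/p=\mathbb{E}_K$, and the image lies in $\mathcal{O}_{\mathbb{C}_p}^\flat$, hence in $\mathbb{E}_K^+$. Moreover $\AKp$ is $p$-adically complete.

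Next, $\AK$ is finite étale over $\AE$ of rank $d$. The trace $\mathrm{Tr}\colon\AK\to\AE$ is a sum of $H_E$-conjugates inside $\mathbb{A}$, and these conjugates preserve $A_\inf$, so $\mathrm{Tr}(\AKp)\subseteq\AEp$. Take a basis $e_1,\dots,e_d$ of $\AK$ over $\AE$ lying in $\AKp$ (for instance, lifts of suitable elements of $\mathbb{E}_K^+$). Then $\AKp$ is contained in the dual lattice $\bigoplus\AEp e_i^\vee$, which is bounded by the discriminant. Since $\AEp=W(k_E)[[\mu]]$ is noetherian, $\AKp$ is a finite $\AEp$-module.

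\textbf{Step 2: a contradiction from the residue-field picture.} Set $R=\AKp$. It is a finite, torsion-free, $\varphi$- and $\Gamma_E$-stable $\AEp$-algebra, and $R/p$ is a $\varphi$- and $\Gamma$-stable subring of $\mathbb{E}_K^+=k_E[[\widetilde{\pi}_K]]$ containing $\mathbb{E}_E^+$. My aim is to show $R/p=\mathbb{E}_E^+$. Granting this, $R=\AEp+pR$, and $p$-adic completeness together with finiteness gives $R=\AEp$ by Nakayama.

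To show $R/p=\mathbb{E}_E^+$, suppose $\bar x\in R/p$ has $\bar x\notin\mathbb{E}_E^+$. Then $R/p$ is an order of $\mathbb{E}_K$ of rank $d$ over $\mathbb{E}_E^+$, because $\mathbb{E}_K/\mathbb{E}_E$ has prime-free rank considerations only through $d$. I would argue by rank: compare $R\otimes_{\AEp}\AE$ with $\AK$. Since $R$ is finite over the two-dimensional regular ring $\AEp$, its reflexive hull is free. The discriminant of $R$ over $\AEp$ is then a $\Gamma$-stable ideal that becomes the unit ideal after inverting $\mu$ and completing, because $\AK/\AE$ is étale. By the classification of $\Gamma$-stable height-one primes of $W(k_E)[[\mu]]$ (namely $p$, $\mu$, and $\varphi^n(q)$), together with $p$-primitivity, this ideal is generated by $\mu^m\prod_n\varphi^{n}(q)^{m_n}$. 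Reducing mod $p$, it must equal the discriminant of the order $R/p\subseteq\mathbb{E}_K^+$, which is a power of $\widetilde{\pi}$. On the other hand, $\varphi$-stability of $R$ gives $\varphi^*R\to R$ injective with cokernel killed by the discriminant ratio $\varphi(\mathrm{disc})/\mathrm{disc}$. I will combine this with the Frobenius structure $\mathbb{E}_K^{+}\otimes_{\varphi}\mathbb{E}_E^+\cong(\mathbb{E}_K^+)^{p}\cdot\mathbb{E}_E^+$.

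\textbf{Main obstacle.} The hardest part will be the final contradiction: a genuine lift to $A_\inf$ of a ramified element, for example of $\widetilde{\pi}_K$, should be impossible. I expect this to come from the interaction of $\Gamma$ with the wild ramification of $\mathbb{E}_K/\mathbb{E}_E$. The first thing I would try is to work with $\theta\circ\varphi^{-n}$, i.e.\ to evaluate elements of $R$ at the points $\zeta_{p^n}-1$. Elements of $\AEp=W(k_E)[[\mu]]$ specialize into $W(k_E)[\zeta_{p^n}]\subseteq\mathcal{O}_{E_n}$, while the inclusion $R\subseteq A_\inf$ forces the images of elements of $R$ to lie in $\mathcal{O}_{\mathbb{C}_p}$. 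These images would generate $\mathcal{O}_{K_n}$ over $\mathcal{O}_{E_n}$ integrally for all $n$, at a rate that is incompatible with the growth of the different $\mathfrak{D}_{K_n/E_n}$. That incompatibility is where I expect the contradiction to come from.
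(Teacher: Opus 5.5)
\textbf{Verdict: there is a genuine gap.} Your reduction to $E$ and the inclusion $\AEp\subseteq\AKp$ are fine. But the proposal stops where the content of the theorem lies: you never prove that $\AKp/p=\mathbb{E}_E^+$, i.e.\ that $\AKp$ has rank one over $\AEp$, and you concede that the final contradiction is missing.

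The constraints you collect cannot force it. $\Gamma$-stability and $p$-primitivity give $D=u\,\mu^m\prod_n\varphi^n(q)^{m_n}$ with $q=\varphi(\mu)/\mu$. $\varphi$-stability gives $D\mid\varphi(D)$ with square quotient, and $\bar D$ is a power of $\widetilde{\pi}$. But $D=\mu^2$ satisfies all of this, so nothing excludes a ramified order. The heuristic of your last paragraph cannot work as a comparison of rates either. The normalized discriminants $v_{E_n}(\mathfrak{d}_{K_n/E_n})$ stay bounded, and for $D=u\mu^m$ the orders $\theta\circ\varphi^{-n}(\AKp)$ also have bounded normalized discriminant (equal to $m$). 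The contradiction has to come from exact non-vanishing at specific points, not from growth.

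Two intermediate claims are also wrong.
\begin{enumerate}
\item In Step 1, no $\AE$-basis of $\AK$ can lie in $\AKp$ when $d>1$, since that would contradict the statement itself; lifts of elements of $\mathbb{E}_K^+$ to $\AK$ need not lie in $A_\inf$. Finiteness must instead come from $\AKp/p\hookrightarrow\mathbb{E}_K^+$ plus $p$-adic completeness, as in the paper.
\item The rank of $\AKp/p$ need not a priori be $d$: $(\AKp/p)\otimes\mathbb{E}_E$ could be a proper intermediate field of $\mathbb{E}_K/\mathbb{E}_E$.
\end{enumerate}

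The missing idea is to exploit the rigidity that $A_\inf$ imposes on the fibres of $\AKp$ at $\mu=0$ (and at $\mu=\zeta_{p^n}-1$). The paper does this through Berger's observation. The map $\AKp/\mu\hookrightarrow A_\inf/\mu\hookrightarrow\prod_{i\ge0}\mathcal{O}_{\mathbb{C}_p}$, $a\mapsto(\theta(\varphi^i(a)))_{i\ge0}$, is injective, and finiteness over $\AFp$ puts its image in $\prod_i\mathcal{O}_{K_n}$. Hence an open subgroup of $\Gamma$ acts trivially on $\AKp/\mu$. After reducing to $K/F$ Galois, $\AKp\otimes\AE$ is an \'{e}tale $(\varphi,\Gamma)$-module of finite height with a lattice on which $\Gamma$ is trivial mod $\mu$. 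By Wach's theorem its representation is therefore crystalline. It is also a subrepresentation of a regular representation $\mathbb{Q}_p[\Gal(K/E')]$ with $K/E'$ totally ramified, so it is unramified. It therefore lies in the inertia invariants, which are $\mathbb{Q}_p$, and the rank is one.

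Your discriminant route can in fact be closed with the same kind of input, bypassing Wach's theorem.
\begin{enumerate}
\item Since $\mu$ and $\varphi^{n-1}(q)$ are units in $\AE$, one has $\AKp/\mu\hookrightarrow A_\inf/\mu$, which is reduced by the injection above.
\item Likewise $\AKp/\varphi^{n-1}(q)\hookrightarrow A_\inf/\varphi^{n-1}(q)\cong\mathcal{O}_{\mathbb{C}_p}$ via $\theta\circ\varphi^{-n}$.
\item These fibres are reduced and finite free over $W(k_E)$, resp.\ $W(k_E)[\zeta_{p^n}]$, in characteristic $0$. So $D$ does not vanish at $\mu=0$ or at $\mu=\zeta_{p^n}-1$.
\item Hence $m=m_n=0$ and $D$ is a unit. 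Then $\AKp/p$ is \'{e}tale over $\mathbb{E}_E^+$, which inside the totally ramified $\mathbb{E}_K$ forces rank one.
\end{enumerate}
As written, however, this non-vanishing step --- the heart of the argument --- is absent.
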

\begin{remark}
\cite[REMARQUE on page 393]{Wach1996} contains a subtle mistake. The right-hand side is not $W(k_{K})[[\mu]]$ since $\AKp$ contains $W(k_{K_\infty})$. Note that if $K$ is absolutely unramified, we have $k_{K_\infty}=k_K$.
\end{remark}

\begin{proposition}\label{prop:weaker main theorems}
    The following hold.
    \begin{enumerate}
        \item Let $K$ be a finite Galois extension of $F$ such that $K_\infty/F_\infty$ is totally ramified, i.e., the residue field of $K_\infty$ coincides with that of $F_\infty$. Then, we have $\AFp =\AKp$.
        \item Let $K$ be a finite (not necessarily Galois) extension of $F$ such that $K_\infty/F_\infty$ is totally ramified. Then, we have $\AFp =\AKp$.
    \end{enumerate}
\end{proposition}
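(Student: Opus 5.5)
The plan is to reduce the statement to a problem modulo $p$ and then solve that problem using the extra structure that $\AKp$ carries. In both parts the inclusion $\AFp\subseteq\AKp$ is clear, since $\AF\subseteq\AK$. The first step is the saturation property used in the sketch of \cref{lem:main thm for unramified extension}: $W(\mathbb{C}_p^\flat)/A_\inf$ is $p$-torsion free, so
\[
\AKp\cap p\AK=p\AKp .
\]
Consequently $\AKp/p$ injects into $\AK/p=\mathbb{E}_K$, and its image $\overline{R}_K$ lies in $\mathbb{E}_K\cap\mathcal{O}_{\mathbb{C}_p}^\flat=\mathbb{E}_K^+$. The same holds for $F$, and \cref{lem:main thm for unramified extension} gives $\overline{R}_F=\mathbb{E}_F^+$.

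Suppose we know that $\overline{R}_K=\mathbb{E}_F^+$. Then a successive approximation finishes the proof. Given $x\in\AKp$, choose $y_0\in\AFp$ with $x-y_0\in p\AK\cap A_\inf=p\AKp$, and iterate on $(x-y_0)/p$. The resulting series $\sum p^iy_i$ converges in the $p$-adically complete ring $\AFp$, and its limit equals $x$ because $\AKp\subseteq W(\mathbb{C}_p^\flat)$ is $p$-adically separated. So everything reduces to the residue statement $\overline{R}_K=\mathbb{E}_F^+$. Here the hypothesis that $K_\infty/F_\infty$ is totally ramified enters: $\mathbb{E}_K^+=k_F[[\widetilde{\pi}_K]]$ is totally ramified of degree $d=[K_\infty:F_\infty]$ over $\mathbb{E}_F^+=k_F[[\widetilde{\pi}]]$.

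To pin down $\overline{R}_K$, I would use three structures on $\overline{R}_K$, all inherited from $\AKp$:
\begin{itemize}
\item it is a subring with $\mathbb{E}_F^+\subseteq\overline{R}_K\subseteq\mathbb{E}_K^+$;
\item in case (1) it is stable under $\Gal(K_\infty/F_\infty)$, since $\AK$ and $A_\inf$ are both stable under this group;
\item it is stable under the $\delta$-structure, which is the key extra input.
\end{itemize}
For the $\delta$-structure, note that if $x\in\AKp$ then $\delta(x)=(\varphi(x)-x^p)/p$ lies in $\AK$, because $\AK$ is $p$-torsion free and $\varphi$-stable with $\varphi(x)\equiv x^p$ mod $p$. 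It also lies in $A_\inf$ by the saturation property above. Hence $\delta(x)\in\AKp$, and the reduction $\overline{\delta(x)}$ again lies in $\overline{R}_K\subseteq\mathbb{E}_K^+$. The point is that $\overline{\delta(x)}$ is \emph{not} determined by $\overline{x}$ alone inside $\mathbb{E}_K$. However, once we write $x=\sum_j a_j\pi_K^j$ with $a_j\in\AF$ and $0\le j<d$, using the $\AF$-basis $1,\pi_K,\dots,\pi_K^{d-1}$ given by the étale extension $\AF\to\AK$, we get the formula
\[
\delta(x)\equiv\sum_j \bigl(\delta(a_j\pi_K^j)\bigr)+\text{(cross terms)}\pmod p .
\]
Moreover $\delta(\pi_K)$ mod $p$ is computed from $P(T)$ and $\varphi(P)$. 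Because $\overline{P}$ is Eisenstein, $\delta(\pi_K)$ has a pole: heuristically $\varphi(\pi_K)$ has to be a root of $P^{\varphi}$ while $\pi_K^p$ is a root of a different lift, and their difference divided by $p$ involves $\overline{P}'(\widetilde{\pi}_K)^{-1}$, which has negative valuation by wild ramification.

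In case (1), I would first use traces: for $x\in\AKp$, $\operatorname{Tr}_{\AK/\AF}(x\pi_K^{-j}\cdot\text{(different)})$ lands in $\AFp$ up to controlled denominators. This should reduce the problem to showing that no element $\overline{x}\in\overline{R}_K$ has a nonzero component of $\widetilde{\pi}_K$-adic valuation not divisible by $d$. Part (2) should then follow from (1) by passing to the Galois closure $L$ of $K$ over $F$. Replacing $L$ by a suitable finite subextension of $L_\infty$, using \cref{prop:subextension of p-cyclotomic extension}, we may assume $L_\infty/F_\infty$ is totally ramified if $K$ is, or else pass through the maximal unramified subextension. This gives $\AFp\subseteq\AKp\subseteq\ALp=\AFp$.

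The main obstacle is the residue statement $\overline{R}_K=\mathbb{E}_F^+$, specifically showing that $\delta$-stability together with integrality forces it. My first attempt would be a valuation argument. Take $x\in\AKp$ whose reduction has minimal ``bad'' valuation, meaning a leading term $c\widetilde{\pi}_K^m$ with $d\nmid m$. Compute $\overline{\delta(x)}$, or $\overline{\delta^n(x)}$ for suitable $n$, using the explicit Eisenstein lift $P$, and show that its valuation is negative. That would contradict $\overline{\delta(x)}\in\mathbb{E}_K^+$. This is the same mechanism as Pentland's result, and I expect that making the cross terms and the valuation of $\delta(\pi_K)$ precise will be the real work.
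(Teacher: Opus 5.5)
\textbf{There is a genuine gap: the residue statement $\overline{R}_K=\mathbb{E}_F^+$ is never proved, and the proposed valuation argument cannot prove it.}

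Your reduction to the residue statement (saturation plus successive approximation) is correct. So is your observation that $\AKp$ is $\delta$-stable. But the residue statement is the entire content of the proposition.

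Pentland's argument in Appendix 2 works only because it assumes $R/p=\mathbb{E}_K^+$, so that $R$ contains a lift of the uniformizer itself. In your setting, a contradiction has to hold for \emph{every} lift $x\in\AK$ of a given $\overline{x}$. At the first level, $\overline{\delta(x)}$ is determined by $\overline{x}$ only modulo $\varphi(\mathbb{E}_K)$. For $\overline{x}$ of large valuation, a convenient lift already has $\overline{\delta(x)}\in\mathbb{E}_K^+$. For instance, in the setting of Appendix 2 one computes $\delta(\mu_K^N)\equiv \frac{N}{e}\mu_K^{p(N-e)}\sum_{i=1}^{p-1}\frac{1}{p}\binom{p}{i}\mu_K^{ei} \pmod{p\AK}$. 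This has valuation $p(N-e)+e>0$ for every $N\ge e$, including $e\nmid N$.

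So an element of ``minimal bad valuation'' gives no contradiction at level one. Nothing in your sketch rules out $\overline{R}_K$ being an order such as $\mathbb{E}_F^+ + \widetilde{\pi}_K^N\mathbb{E}_K^+$ with $N$ large. Products and differences of Galois conjugates only raise valuations, and traces land in $\mathbb{E}_F^+$ anyway.

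At higher levels, $\overline{\delta^n(x)}$ depends on the Witt coordinates $x_1,\dots,x_n$ of $x$; indeed $\overline{\delta(x)}=x_1$. These coordinates are not controlled by $\overline{x}$. Bounding them uniformly over $\AK$ is exactly the original problem. An induction on Witt coordinates shows that any $\delta$-stable subring of $\AK$ whose elements reduce into $\mathbb{E}_K^+$ already lies in $A_\inf$. So passing to $\delta$ adds no information.

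The trace step is also not meaningful as written. In general $x\pi_K^{-j}\notin A_\inf$, and the ``different'' factor is unspecified.

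The missing idea is global, and the paper never computes Witt coordinates. It uses two inputs:
\begin{enumerate}
\item $\AKp$ is finite free over $\AFp$ (\cref{prop:finite freenness of AKp});
\item $\Gamma_{K_n}$ acts trivially on $\AKp/\mu$ for $n\gg1$ (\cref{prop:Gamma_F_n action is trivial mod mu}).
\end{enumerate}
After replacing $K$ by $K_n$, $\AKp\otimes_{\AEp}\AE$ is an \'{e}tale $(\varphi,\Gamma)$-module of finite height with $\Gamma$ trivial modulo $\mu$. By \cref{thm:Wach} the corresponding representation is crystalline. It embeds into $\BK$, hence into $\mathbb{Q}_p[\Gal(K/E)]$ by \cref{prop:(phi Gamma) module corres to regular rep}. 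A crystalline Artin representation is unramified, so it lies in the inertia invariants, which are $\mathbb{Q}_p$ because $K/E$ is totally ramified. This forces rank one, and hence $\AKp=\AEp$.

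Your derivation of (2) from (1) also has a gap. The Galois closure $L$ of $K$ need not satisfy $k_{L_\infty}=k_F$ even when $K_\infty/F_\infty$ is totally ramified, and passing to $L_n$ does not change this. For example, $K=\mathbb{Q}_2(2^{1/3})$ has Galois closure containing $\zeta_3$. In that case $\ALp=W(k_{L_\infty})[[\mu]]\neq\AFp$, so your chain $\AKp\subseteq\ALp=\AFp$ breaks. You need, as in the paper, to apply (1) to $L_n/M$ with $M=W(k_{L_\infty})[1/p]$. Then use $\AK\cap\mathbb{A}_M=\AF$, which follows from $K_\infty\cap M_\infty=F_\infty$.
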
 

\begin{proposition}
The following hold.
\begin{enumerate}
    \item \cref{prop:weaker main theorems} (1) implies \cref{prop:weaker main theorems} (2).
    \item \cref{prop:weaker main theorems} (2) implies \cref{thm:main theorem}.
\end{enumerate}
\end{proposition}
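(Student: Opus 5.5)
For (1), let $K$ be a finite extension of $F$ with $K_\infty/F_\infty$ totally ramified, and let $L$ be the Galois closure of $K$ over $F$ inside $\overline{F}$. The idea is to produce a finite extension $M$ of $F$ such that $M$ is Galois over $F$, $K\subseteq M$, and $M_\infty/F_\infty$ is totally ramified. Then (1) applied to $M$ gives $\AFp=\mathbb{A}_M^+$, and the inclusions $\AF\subseteq\AK\subseteq\mathbb{A}_M$ give
\begin{align*}
\AFp\subseteq\AKp\subseteq\mathbb{A}_M^+=\AFp .
\end{align*}

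Taking $M=L$ is natural, but $L_\infty/F_\infty$ need not be totally ramified. I will argue as follows. Since $K_\infty/F_\infty$ is totally ramified, each conjugate $\sigma(K)_\infty/F_\infty$ is totally ramified as well, because $\sigma\in G_F$ preserves $F_\infty$ and residue degrees. The field $L_\infty$ is the compositum of these conjugates over $F_\infty$, but a compositum of totally ramified extensions can acquire unramified parts. To deal with this, let $E$ be the maximal unramified subextension of $L_\infty/F$; it is finite over $F$ and Galois. Then $L_\infty/E_\infty$ is totally ramified, and $L$ is Galois over $E$. By \cref{rem:definition of A_K}, (1) holds with $F$ replaced by $E$, so $\AEp=\mathbb{A}_L^+$. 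Thus $\AKp\subseteq\AEp$. It remains to show $\AKp\cap\AE\subseteq\AFp$, i.e.\ $\AK\cap\AE=\AF$. On the mod $p$ level this says $\mathbb{E}_K\cap\mathbb{E}_E=\mathbb{E}_F$: $\mathbb{E}_E/\mathbb{E}_F$ is unramified and $\mathbb{E}_K/\mathbb{E}_F$ is totally ramified, so the intersection is both and hence trivial. Lifting this requires a little care. The rings $\AK\cap\AE$ and $\AF$ are both $p$-adically complete with $p$-torsion-free quotients in $W(\mathbb{C}_p^\flat)$, and their reductions mod $p$ agree, so a successive-approximation argument gives equality. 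I expect the main obstacle to be this lifting of the field-theoretic intersection to the Cohen rings.

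For (2), let $K$ be arbitrary and let $E$ be the maximal unramified extension of $F$ inside $K_\infty$. \cref{prop:subextension of p-cyclotomic extension} lets us replace $K$ by $K E\subseteq K_\infty$, so we may assume $E\subseteq K$. Then $K_\infty/E_\infty$ is totally ramified, and by \cref{rem:definition of A_K} we can apply (2) with base $E$ in place of $F$ to get $\AKp=\AEp$. Finally, \cref{lem:main thm for unramified extension} gives
\begin{align*}
\AEp=W(k_E)[[\mu]],
\end{align*}
and $k_E=k_{K_\infty}$, which proves \cref{thm:main theorem}.
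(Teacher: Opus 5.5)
\textbf{Part (2)} is the paper's argument: your $KE$ plays the role of the paper's $K_n$, and your $E$ is its $F'=W(k_{K_\infty})[1/p]$.

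\textbf{Part (1)} has the same architecture as the paper's. Your $E$ is exactly the paper's $M=W(k_{L_\infty})[1/p]$, and you reduce to $\AK\cap\AE=\AF$ just as it does. There is one slip, and one step done genuinely differently.

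\emph{The slip.} You assert that $L$ is Galois over $E$, which presupposes $E\subseteq L$. This is false in general, because the residue field can grow between $L$ and $L_n$. Take $p=3$ and $K=\mathbb{Q}_3(\alpha)$ with $\alpha^3-3\alpha-3=0$. This polynomial is Eisenstein, and its discriminant is $-135$, which equals $3$ up to squares. Since $K\cap F_\infty=\mathbb{Q}_3$ and $K$ is not Galois, no $K_n/F_n$ can be unramified cubic, so $k_{K_\infty}=\mathbb{F}_3$ and $K$ satisfies the hypothesis. Its Galois closure is $L=K(\sqrt{3})$, with $k_L=\mathbb{F}_3$. Yet $\sqrt{3}\sqrt{-3}/3$ is a square root of $-1$ in $L_1$, so $E\supseteq\mathbb{Q}_3(\sqrt{-1})$ while $\mathbb{Q}_3(\sqrt{-1})\not\subseteq L$.

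The repair is immediate and is what the paper does. Apply (1) over $E$ to $LE$ rather than to $L$ (the paper uses $L_n$). The field $LE$ is finite Galois over $E$ and satisfies $(LE)_\infty=L_\infty$. Moreover $\mathbb{A}_{LE}^+=\ALp$ by \cref{prop:subextension of p-cyclotomic extension}.

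\emph{The different step.} The two proofs of $\AK\cap\AE=\AF$ differ.
\begin{itemize}
\item The paper uses Galois invariants. It writes $\AK=\mathbb{A}^{H_K}$ and $\mathbb{A}_M=\mathbb{A}^{H_M}$, so it suffices that the closed subgroup $H_MH_K$ equals $H_F$. This amounts to $M_\infty\cap K_\infty=F_\infty$, which it checks as $M_n\cap K_n=F_n$ (unramified versus totally ramified over $F_n$).
\item You reduce mod $p$. Set $R:=\AK\cap\AE$. Then $R\cap pW(\mathbb{C}_p^\flat)=pR$ because $\AK$ and $\AE$ each have this property. The image of $R$ in $\mathbb{C}_p^\flat$ lies in $\mathbb{E}_K\cap\mathbb{E}_E$. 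This intersection is $\mathbb{E}_F$, since $\mathbb{E}_E/\mathbb{E}_F$ is unramified while $\mathbb{E}_K/\mathbb{E}_F$ is totally ramified (its residue field is $k_{K_\infty}=k_F$). Successive approximation, using completeness of $\AF$ and separatedness of $W(\mathbb{C}_p^\flat)$, then gives $R=\AF$.
\end{itemize}

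Your argument is correct; you do not even need completeness of $R$. It is the same d\'evissage as in \cref{lem:mod p injection of AKp} and \cref{prop:p-adically complete}, so the step you flagged as the main obstacle is routine. Your route avoids the identification $\AK=\mathbb{A}^{H_K}$ and infinite Galois theory, working instead with the residue fields of the fields of norms. The paper's route keeps the ramification argument at the level of the $p$-adic fields $M_n$ and $K_n$.
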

\begin{proof}
(1)
Let $K$ be a finite (not necessarily Galois) extension of $F$ such that $K_\infty/F_\infty$ is totally ramified. Let $L$ be the Galois closure of $K/F$ and $M:=W(k_{L_\infty})[1/p]$.
Then, there exists a positive integer $n\geq 1$ such that the residue field of $L_n$ coincides with that of $L_\infty$. Then $M$ is contained in $L_n$. Note that $L_n$ is the composite field of $F_n $ and $L$, thereby $L_n/F$ is a finite Galois extension.
\[\begin{tikzcd}
&L_n& \\ 
&L \arrow[u, dash]& \\ 
K\arrow[ur, dash]&&M:=W(k_{L_\infty})[1/p]\arrow[luu,dash,"\text{Galois totally ramified}"'] \\ 
&F\arrow[uu, dash, "\text{Galois}"]\arrow[lu, "\text{totally ramified}", dash]\arrow[ru, "\text{unramified}"',dash]&
\end{tikzcd}\]
By the definition of $M$, $(L_n)_\infty/M_\infty$ is totally ramified. Thus, we can apply \cref{prop:weaker main theorems} (1) to $L_n/M$ and we see that $\mathbb{A}_M^+=\mathbb{A}_{L_n}^+=\mathbb{A}_L^+$ by \cref{prop:subextension of p-cyclotomic extension}. 

As $\AKp \subseteq \ALp$, it suffices to show that $\mathbb{A}_M^+\cap\AKp=\AFp$, which is equivalent to showing that
\begin{alignat*}{2}
    &&\mathbb{A}_M\cap \AK&=\AF\\
    \Leftrightarrow \quad&& \mathbb{A}^{H_M}\cap \mathbb{A}^{H_K}&=\mathbb{A}^{H_F}\\
    \Leftrightarrow \quad&&\mathbb{A}^{H_MH_K}&=\mathbb{A}^{H_F}\\
    \Leftarrow\quad&& H_MH_K&=H_F\\
    \Leftrightarrow\quad&&M_\infty\cap K_\infty&=F_\infty.
\end{alignat*}
Here, $H_MH_K$ denotes the group generated by $H_M$ and $H_K$. Note that we used the fact that it is closed in $\Gal \left(\overline{F}/F\right)$, since $H_M$ and $H_K$ are open subgroups of $\Gal \left(\overline{F}/F_\infty\right)$. Hence, it suffices to show that $M_n\cap K_n = F_n$ for any positive integer $n\geq 1$. By assumption, $K_n/F_n$ is totally ramified. Since $F_n/F$ is totally ramified and $M/F$ is unramified, we see that $M_n/F_n$ is unramified. Hence we conclude that $M_n\cap K_n = F_n$.
\[\begin{tikzcd}
K_n&&M_n& \\ 
&F_n\arrow[ru, "\text{unramified}",dash]\arrow[ul, "\text{totally ramified}",dash]&&M\arrow[lu, dash] \\ 
&&F\arrow[ru, dash,"\text{unramified}"']\arrow[lu, dash,"\text{totally ramified}"]&
\end{tikzcd}\]

(2)
Let $K$ be an arbitrary $p$-adic field. Then, there exists a positive integer $n\geq 1$ such that the residue field of $K_n$ coincides with that of $K_\infty$. Let $F':=W(k_{K_\infty})[1/p]$. Then $F'$ is contained in $K_n$ and $(K_{n})_\infty/F'_\infty$ is totally ramified. \cref{lem:main thm for unramified extension,prop:subextension of p-cyclotomic extension,prop:weaker main theorems} (2) imply $\AKp=\mathbb{A}_{K_n}^+=\mathbb{A}_{F'}^+=W(k_{K_\infty})[[\mu]]$ as desired.
\end{proof}

It remains to show \cref{prop:weaker main theorems} (1). We prove it by using \cite[TH\'{E}OR\`{E}ME, REMARQUES on page 381]{Wach1996}. 
We first study some properties of $\AKp$. Since $\AK/p=\mathbb{E}_K \subseteq \mathbb{C}_p^\flat$ and $A_\inf/p=\mathcal{O}_{\mathbb{C}_p}^\flat\subseteq \mathbb{C}_p^\flat$, we can consider their intersection in $\mathbb{C}_p^\flat$, which coincides with $k_{K_\infty}[[\widetilde{\pi}_K]]$ where $\widetilde{\pi}_K\in \mathbb{E}_K$ is a uniformizer of $\mathbb{E}_K$.

\begin{lemma}\label{lem:mod p injection of AKp}
For any $p$-adic field $K$, the canonical homomorphism
\begin{align*}
     \AKp/p  \rightarrow \AK/p \cap A_\inf/p=k_{K_\infty}[[\widetilde{\pi}_K]]=:\mathbb{E}_K^+
\end{align*}
is injective.
\end{lemma}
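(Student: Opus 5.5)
The plan is to identify the kernel of $\AKp/p\to \AK/p$ and show it is zero. Injectivity into the intersection follows once injectivity into $\AK/p$ is known, because the map factors through both $\AK/p$ and $A_\inf/p$. So it suffices to prove that $\AKp\cap p\AK = p\AKp$.

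Let $x\in\AKp$ map to zero in $\AK/p$. Then $x=py$ for some $y\in\AK$, and we need $y\in A_\inf$. This is exactly the $p$-saturation property already used in the sketch of \cref{lem:main thm for unramified extension}: the quotient $W(\mathbb{C}_p^\flat)/A_\inf$ is $p$-torsion free.

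To see this, consider the short exact sequence
\begin{align*}
0\to A_\inf\to W(\mathbb{C}_p^\flat)\to W(\mathbb{C}_p^\flat)/A_\inf\to 0
\end{align*}
and multiplication by $p$ on it. Both $A_\inf$ and $W(\mathbb{C}_p^\flat)$ are $p$-torsion free. The snake lemma then gives an exact sequence
\begin{align*}
0\to (W(\mathbb{C}_p^\flat)/A_\inf)[p]\to A_\inf/p\to W(\mathbb{C}_p^\flat)/p.
\end{align*}
The last map is the inclusion $\mathcal{O}_{\mathbb{C}_p}^\flat\hookrightarrow\mathbb{C}_p^\flat$, which is injective, so the $p$-torsion vanishes. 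Hence $py\in A_\inf$ forces $y\in A_\inf$, so $y\in\AK\cap A_\inf=\AKp$, and $x\in p\AKp$.

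Finally, I will check that the image lands in $\AK/p\cap A_\inf/p$ and that this intersection is $k_{K_\infty}[[\widetilde{\pi}_K]]$, as stated just before the lemma. There is no real obstacle here. The only point needing care is the snake lemma computation, and in particular the identification $W(R)/p=R$ for a perfect ring $R$, applied with $R=\mathcal{O}_{\mathbb{C}_p}^\flat$ and $R=\mathbb{C}_p^\flat$.
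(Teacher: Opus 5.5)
Your proof is correct and is essentially the paper's argument. Both rest on the injectivity of $A_\inf/p\to W(\mathbb{C}_p^\flat)/p$ (equivalently, $p$-saturation of $A_\inf$ in $W(\mathbb{C}_p^\flat)$) together with $p$-torsion freeness. The paper phrases this as injectivity of $\AKp/p\to W(\mathbb{C}_p^\flat)/p$, also citing $\AK/p\hookrightarrow W(\mathbb{C}_p^\flat)/p$, whereas you prove injectivity into $\AK/p$ directly, which comes to the same thing.
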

\begin{proof}
It is enough to show that the canonical homomorphism
\begin{align*}
     \AKp/p  \rightarrow W(\mathbb{C}_p^\flat)/p
\end{align*}
is injective. Let $a\in \AKp$ such that $a\in pW(\mathbb{C}_p^\flat)$. Since the canonical homomorphism $A_\inf/p \lhook\joinrel\rightarrow W(\mathbb{C}_p^\flat)/p$ is injective, we have $a\in p A_\inf$. Since the canonical homomorphism $\AK/p \lhook\joinrel\rightarrow W(\mathbb{C}_p^\flat)/p$ is injective, we have $a\in p \AK$. As $W(\mathbb{C}_p^\flat)$ is $p$-torsion free and $\AKp:=\AK \cap A_\inf$, we see that $a\in p\AKp$ as desired.
\end{proof}

\begin{proposition}\label{prop:p-adically complete}
For any $p$-adic field $K$, $\AKp$ is $p$-adically complete and separated.
\end{proposition}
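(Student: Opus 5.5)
Separatedness is immediate: since $\AKp\subseteq A_\inf$ and $p^n\AKp\subseteq p^nA_\inf$ (equally $\subseteq p^nW(\mathbb{C}_p^\flat)$), we get $\bigcap_n p^n\AKp\subseteq\bigcap_n p^nA_\inf=0$, as $A_\inf$ is $p$-adically separated. The content of the proposition is completeness. The plan is to identify $p^n\AKp$ with the restriction of the $p$-adic filtration of the ambient rings, and then to use that $\AK$ and $A_\inf$ are both $p$-adically complete and closed inside $W(\mathbb{C}_p^\flat)$.

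The key step is the equality
\begin{align*}
p^n\AKp = \AKp\cap p^nW(\mathbb{C}_p^\flat)\qquad (n\geq 1).
\end{align*}
For $n=1$ this is exactly what the proof of \cref{lem:mod p injection of AKp} shows: if $a\in\AKp$ lies in $pW(\mathbb{C}_p^\flat)$, then $a\in p\AKp$. For general $n$ we argue by induction. Suppose $a\in\AKp\cap p^nW(\mathbb{C}_p^\flat)$. The case $n=1$ gives $a=pb$ with $b\in\AKp$. Since $W(\mathbb{C}_p^\flat)$ is $p$-torsion free, $b\in p^{n-1}W(\mathbb{C}_p^\flat)$, and induction gives $b\in p^{n-1}\AKp$.

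Now take a sequence $(x_m)$ in $\AKp$ that is Cauchy for the $p$-adic topology of $\AKp$, say $x_{m+1}-x_m\in p^m\AKp$. Then $(x_m)$ is Cauchy in $\AK$ and in $A_\inf$, both of which are $p$-adically complete: $\AK$ is the isomorphic image of the $p$-adic completion $(W(k_{K_\infty})[[T]][1/T])^\wedge_p$, and $A_\inf$ is a Witt vector ring of a perfect ring. The limits in $\AK$ and in $A_\inf$ agree as elements of $W(\mathbb{C}_p^\flat)$, because both inclusions are continuous for the $p$-adic topologies and $W(\mathbb{C}_p^\flat)$ is $p$-adically separated. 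Hence the common limit $x$ lies in $\AK\cap A_\inf=\AKp$.

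Finally, for each $m$ the difference $x-x_m$ lies in $\AKp\cap p^mW(\mathbb{C}_p^\flat)$, since it is a $p$-adic limit of elements of $p^mW(\mathbb{C}_p^\flat)$ and $p^mW(\mathbb{C}_p^\flat)$ is $p$-adically closed. By the key equality, $x-x_m\in p^m\AKp$, so $x_m\to x$ in the $p$-adic topology of $\AKp$. The main point to check is that key equality, namely that the intrinsic $p$-adic topology on $\AKp$ agrees with the subspace topology. It reduces to \cref{lem:mod p injection of AKp} together with $p$-torsion freeness, so I expect no real obstacle.
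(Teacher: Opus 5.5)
Your proof is correct and follows essentially the same route as the paper. In both, the limit is produced in $\AK$ and in $A_\inf$, the two are matched in $W(\mathbb{C}_p^\flat)$, and the mod-$p$ injectivity from \cref{lem:mod p injection of AKp} together with $p$-torsion freeness shows that the limit has the right image; your level-$n$ equality $p^n\AKp=\AKp\cap p^nW(\mathbb{C}_p^\flat)$ is just a cleaner, more explicit form of the paper's final step, which reduces the injectivity of $\varprojlim_n\AKp/p^n\to\varprojlim_n W(\mathbb{C}_p^\flat)/p^n$ to the mod-$p$ case.
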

\begin{proof}
Consider the following commutative diagram:
\[\begin{tikzcd}
&\AK\arrow[rrr, "\cong "]\arrow[rd,hook]&&& \varprojlim_{n}\AK/p^n\arrow[rd]\arrow[from=ldd]& \\ 
&&W(\mathbb{C}_p^\flat)\arrow[rrr, "\cong ", pos=0.4,crossing over]&&& \varprojlim_{n}W(\mathbb{C}_p^\flat)/p^n \\ 
\AKp\arrow[ruu, hook]\arrow[rrr]\arrow[rd, hook]&&& \varprojlim_{n}\AKp/p^n\arrow[rd]&& \\ 
&A_\inf\arrow[ruu, hook,crossing over]\arrow[rrr, "\cong "]&&& \varprojlim_{n}A_\inf/p^n\arrow[ruu]&.
\end{tikzcd}\]
It is easy to show that $\AKp  \rightarrow  \varprojlim_{n}\AKp/p^n$ is injective, so it suffices to show that it is surjective. Let $(a_n)_{n\geq  1} \in  \varprojlim_{n}\AKp/p^n$ be an arbitrary element. Let $(b_n)_{n\geq 1}\in  \varprojlim_{n}\AK/p^n$ (resp. $(c_n)_{n\geq 1}\in  \varprojlim_{n}A_\inf/p^n$) be the image of $(a_n)_{n\geq 1}$. 
Since $\AK$ (resp. $A_\inf$) is $p$-adically complete and separated, there exists $b\in\AK$ (resp. $c\in A_\inf$) whose image in $ \varprojlim_{n}\AK/p^n$ (resp. $ \varprojlim_{n}A_\inf/p^n$) is $(b_n)_{n\geq 1}$ (resp. $(c_n)_{n\geq 1}$). Since the image of $(b_n)_{n\geq 1}$ in $ \varprojlim_{n}W(\mathbb{C}_p^\flat)/p^n$ coincides with that of $(c_n)_{n\geq 1}$ in $ \varprojlim_{n}W(\mathbb{C}_p^\flat)/p^n$ and $W(\mathbb{C}_p^\flat)$ is $p$-adically complete and separated, $b=c$ in $W(\mathbb{C}_p^\flat)$. Hence $b=c=:a\in \AKp$. 
It remains to show that the image of $a$ in $ \varprojlim_{n}\AKp/p^n$ is $(a_n)_{n\geq 1}$. It suffices to show that $ \varprojlim_{n}\AKp/p^n  \rightarrow  \varprojlim_{n}W(\mathbb{C}_p^\flat)/p^n$ is injective. Since $\varprojlim_{n}W(\mathbb{C}_p^\flat)/p^n\cong W(\mathbb{C}_p^\flat)$ is $p$-torsion free and $ \varprojlim_{n}\AKp/p^n$ is $p$-adically separated, it is enough to show that $\AKp/p  \rightarrow W(\mathbb{C}_p^\flat)/p$ is injective, which we have already shown in the proof of \cref{lem:mod p injection of AKp}.
\end{proof}

\begin{proposition}\label{prop:finite freenness of AKp}
    For any $p$-adic field $K$, the $\AFp$-module $\AKp$ is finite free.
\end{proposition}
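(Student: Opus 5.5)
We prove this with the standard lifting argument for $p$-adically complete modules whose reduction mod $p$ is finite free. The ingredients are \cref{prop:p-adically complete}, \cref{lem:mod p injection of AKp}, and the structure of $\AFp=W(k_F)[[\mu]]$ given by \cref{lem:main thm for unramified extension}. Note that $\AFp/p=\mathbb{E}_F^+=k_F[[\epsilon-1]]$ is a complete DVR, hence a PID.

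\textbf{Step 1: $\AKp/p$ is finite free over $\mathbb{E}_F^+$.}
By \cref{lem:mod p injection of AKp}, $\AKp/p$ embeds into $\mathbb{E}_K^+=k_{K_\infty}[[\widetilde{\pi}_K]]$. The ring $\mathbb{E}_K^+$ is the integral closure of $\mathbb{E}_F^+$ in the finite separable extension $\mathbb{E}_K/\mathbb{E}_F$. It is therefore a finite $\mathbb{E}_F^+$-module, of rank $[\mathbb{E}_K:\mathbb{E}_F]$, and since $\mathbb{E}_F^+$ is a complete DVR, finiteness holds without any excellence hypothesis. Submodules of finite modules over the noetherian ring $\mathbb{E}_F^+$ are finite, and they are torsion free, so $\AKp/p$ is finite free over the PID $\mathbb{E}_F^+$. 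Its rank is some $d\le[\mathbb{E}_K:\mathbb{E}_F]$.

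\textbf{Step 2: lift a basis.}
Choose $e_1,\dots,e_d\in\AKp$ whose images form an $\mathbb{E}_F^+$-basis of $\AKp/p$. Consider the $\AFp$-linear map $f\colon(\AFp)^d\to\AKp$ sending the standard basis to $(e_i)$.

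\textbf{Step 3: $f$ is surjective.}
Given $x\in\AKp$, write $x=\sum a_i^{(0)}e_i+px_1$ with $x_1\in\AKp$, and iterate this on $x_1$, $x_2$, and so on. Because $\AFp$ is $p$-adically complete, the sums $\sum_n p^n a_i^{(n)}$ converge. Because $\AKp$ is $p$-adically separated and complete (\cref{prop:p-adically complete}), the remainder terms $p^{n}x_n$ tend to $0$. Hence $x=\sum_i\bigl(\sum_n p^na_i^{(n)}\bigr)e_i$.

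\textbf{Step 4: $f$ is injective.}
Suppose $\sum a_ie_i=0$ with not all $a_i=0$. Since $\AFp$ is $p$-adically separated, $\bigcap_n p^n\AFp=0$, so we may divide by the largest common power of $p$; here we use that $\AFp$ is $p$-torsion free, which it is as a subring of $W(\mathbb{C}_p^\flat)$. This leaves some $a_i\notin p\AFp$. Reducing mod $p$ gives a nontrivial relation among the images of the $e_i$ in $\AKp/p$, contradicting that they form a basis.

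\textbf{Main obstacle.}
The real input is Step 1: one must know that $\mathbb{E}_K^+$ is finite over $\mathbb{E}_F^+$, and that $\AKp/p$ sits inside it injectively. The injectivity is \cref{lem:mod p injection of AKp}. The finiteness is the standard fact that the valuation ring of a finite extension of a complete discretely valued field is finite over the base valuation ring. The remaining steps are routine Nakayama-type arguments. Note that the argument does not need the image of $\AKp/p$ to be all of $\mathbb{E}_K^+$; only freeness is required, and that is automatic over the PID $\mathbb{E}_F^+$.
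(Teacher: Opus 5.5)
Your proof is correct and follows the paper's argument: reduce modulo $p$ using $p$-adic completeness and $p$-torsion freeness, then use the embedding $\AKp/p\hookrightarrow \mathbb{E}_K^+$ into a finite torsion-free module over the DVR $\AFp/p$; your Steps 2--4 simply spell out the Nakayama-type lifting that the paper leaves implicit. One small fix: in Step 4, dividing the relation $\sum a_i e_i=0$ by $p^m$ requires that $\AKp$ (not $\AFp$) is $p$-torsion free, which holds for the same reason you gave, since $\AKp\subseteq W(\mathbb{C}_p^\flat)$.
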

\begin{proof}
Since $\AFp$ and $\AKp$ are $p$-adically complete and $p$-torsion free, it is enough to show that $\AKp/p$ is a finite free $\AFp/p$-module. By \cref{lem:mod p injection of AKp}, we have an injection
\begin{align*}
     \AKp/p \lhook\joinrel\rightarrow k_{K_\infty}[[\widetilde{\pi}_K]].
\end{align*}
Since $k_{K_\infty}[[\widetilde{\pi}_K]]$ is a torsion free finitely generated $\AFp/p$-module, $\AKp/p$ is a torsion free finitely generated $\AFp/p$-module. As $\AFp/p$ is a complete discrete valuation ring, this implies that $\AKp/p$ is a finite free $\AFp/p$-module, as desired.
\end{proof}



\begin{lemma}\label{lem:AKp/mu injets A_inf/mu}
For any $p$-adic field $K$, the canonical homomorphism $\AKp/\mu  \rightarrow A_\inf /\mu$ is injective.
\end{lemma}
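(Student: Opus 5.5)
We have to show: if $a\in\AKp$ and $a=\mu b$ with $b\in A_\inf$, then $b\in\AKp$. Since $\mu$ is invertible in $W(\widehat{K_\infty}^\flat)\supseteq\AK$ (indeed $\mu\in\AF^\times\subseteq\AK^\times$), the element $b=a\mu^{-1}$ lies in $\AK$. By hypothesis $b\in A_\inf$. Hence $b\in\AK\cap A_\inf=\AKp$, and so $a\in\mu\AKp$. This is the whole argument: the kernel of $\AKp/\mu\to A_\inf/\mu$ is $(\AKp\cap\mu A_\inf)/\mu\AKp$, and the computation above shows $\AKp\cap\mu A_\inf=\mu\AKp$.

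The only point needing verification is that $\mu$ is a unit in $\AK$, and that the quotient $b=a/\mu$ computed in $\AK$ coincides with the element $b\in A_\inf$. For the first, recall that $\AF$ is the $p$-adic completion of $W(k_F)[[\mu]][1/\mu]$, so $\mu^{-1}\in\AF$. Moreover $\AF\subseteq\AK$ by the characterization of $\AK$ recalled above. For the second, both $A_\inf$ and $\AK$ sit inside $W(\mathbb{C}_p^\flat)$, and $W(\mathbb{C}_p^\flat)$ is an integral domain (it is $p$-torsion free with $W(\mathbb{C}_p^\flat)/p=\mathbb{C}_p^\flat$ a field, and it is $p$-adically separated). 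Thus $\mu b=a=\mu\cdot(\mu^{-1}a)$ forces $b=\mu^{-1}a$. Alternatively, one can simply use that $\mu$ is invertible in $W(\mathbb{C}_p^\flat)$.

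I do not expect any real obstacle. The step most worth stating carefully is the identification of the kernel, namely that injectivity is equivalent to $\AKp\cap\mu A_\inf=\mu\AKp$. Every other step is immediate from $\mu\in\AK^\times$.
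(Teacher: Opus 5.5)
Your proposal is correct and follows essentially the same argument as the paper: reduce to $\AKp\cap\mu A_\inf\subseteq\mu\AKp$, write $a=\mu x$, observe $x=\mu^{-1}a$ in $W(\mathbb{C}_p^\flat)$ with $\mu$ a unit of $\AK$, and conclude $x\in\AK\cap A_\inf=\AKp$. Your extra checks (that $\mu^{-1}\in\AF\subseteq\AK$, and that the quotient is unambiguous inside $W(\mathbb{C}_p^\flat)$) are fine but not needed beyond what the paper states.
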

\begin{proof}
It suffices to show $\AKp\cap \mu A_\inf \subseteq \mu\AKp$. Let $a\in\AKp$ such that $a=\mu x$ for some $x\in A_\inf$. Then $x=\mu^{-1}a$ in $W(\mathbb{C}_p^\flat)$. Since $\mu$ is invertible in $\AK$, this implies $x\in \AK$. Thus, $x\in \AK\cap A_\inf =:\AKp$ and $a\in \mu\AKp$.
\end{proof}

The next proposition and its proof are due to L. Berger.

\begin{proposition}[L. Berger]\label{prop:Gamma_F_n action is trivial mod mu}
    For any $p$-adic field $K$, the $\Gamma_{K_n}$-action on $\AKp/\mu$ is trivial for $n\gg 1$.
\end{proposition}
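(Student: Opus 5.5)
By \cref{lem:AKp/mu injets A_inf/mu}, $\AKp/\mu$ embeds $\Gamma_K$-equivariantly into $A_\inf/\mu$. So it suffices to show that some $\Gamma_{K_n}$ acts trivially on the image of $\AKp$ in $A_\inf/\mu$.

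The plan is to pass through $\theta$-type maps. Put $\xi_m:=\varphi^{-m}(\mu)/\varphi^{-m-1}(\mu)$; this is a generator of $\ker(\theta\circ\varphi^{m})$ up to normalization. Then
\begin{align*}
\mu=\varphi^{-1}(\mu)\prod_{i\ge 1}\varphi^{i-1}(\xi),
\end{align*}
so $\mu$ divides $\varphi^{r}(\xi)\cdots\varphi(\xi)\cdot\mu$-type products. More usefully, $A_\inf/\mu$ embeds into $\prod_{r\ge 0}A_\inf/\varphi^{r}(\xi)\times(\text{something})$ after inverting $p$.

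The cleaner route I would actually try is as follows. Write $x\in\AKp$ as an element of $W(\mathcal O^\flat_{\widehat{K_\infty}})$ and compute $g(x)-x$ for $g\in\Gamma_{K_n}$. Since $\AKp$ is finite free over $\AFp=W(k_F)[[\mu]]$ by \cref{prop:finite freenness of AKp}, choose a basis $e_1,\dots,e_d$. The $\Gamma_K$-action is continuous, so there is an $n$ with $g(e_j)-e_j\in\mu\AKp+p^{?}$. This is not enough by itself, so we need a structural reason.

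The structural input I would use is the map $\theta_m:=\theta\circ\varphi^{-m}\colon A_\inf\to\mathcal O_{\mathbb C_p}$. The ideal $\mu A_\inf$ is the intersection over $m\ge 0$ of the kernels of $\theta\circ\varphi^{-m}$. To be precise, $A_\inf/\mu\hookrightarrow\prod_{m\ge0}\mathcal O_{\mathbb C_p}$ up to $p$-torsion issues, which is standard: $\mu=\prod$ of the $\varphi^{-m}(\xi)$ factors in a suitable limit sense. For $x\in\AKp$, the element $\theta(\varphi^{-m}(x))$ lies in $\widehat{K_\infty}$, and in fact in $K_{m+c}$ for a constant $c$ depending on $K$. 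Indeed $\varphi^{-m}(\AK)\subseteq\varphi^{-m}$ of the ring generated over $\AF^+$ by $\pi_K$, and $\theta\circ\varphi^{-m}(\mu)=\zeta_{p^m}-1$, which makes $\theta\varphi^{-m}(\AFp)\subseteq\mathcal O_{F_m}$. Elements of $\AKp$ are integral over $\AFp$ by finiteness, so $\theta\varphi^{-m}(x)$ is integral over $\mathcal O_{F_m}$ and lies in $\widehat{K_\infty}$. Then $\gamma\in\Gamma_{K_n}$ with $n$ large acts on $\theta\varphi^{-m}(x)$ compatibly with how it acts on $\zeta_{p^m}$, through $\gamma\varphi^{-m}=\varphi^{-m}\gamma$.

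The main obstacle is that this shows $\gamma$ moves $\theta\varphi^{-m}(x)$ only in the way it moves $\zeta_{p^m}$, so triviality is not automatic. Here is what I would try first. Evaluate modulo $\mu$, so that only the image in $A_\inf/\mu$ matters. Writing $x=\sum a_j e_j$ with $a_j\in\AFp$ and reducing mod $\mu$, we get $a_j\equiv a_j(0)\in W(k_F)$, which is fixed by $\Gamma$. It therefore suffices to find one $n$ such that $\gamma(e_j)\equiv e_j\pmod{\mu A_\inf}$ for the finitely many basis elements, and hence, by \cref{lem:AKp/mu injets A_inf/mu}, modulo $\mu\AKp$.

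For each $e_j$ I would use the following argument. The orbit map $\Gamma_K\to\AKp/\mu$ is continuous for the $(p,\mu)$-adic topology. Moreover, $\AKp/\mu$ is $p$-adically separated and finite free over $W(k_F)$; this follows from the injection into $A_\inf/\mu$ together with the finite freeness statement. So $\AKp/\mu$ is a finite free $W(k_F)$-module carrying a continuous $\Gamma_K$-action. The remaining task is to show that a finite-index subgroup acts unipotently with trivial log. I would argue via $\varphi$. Frobenius induces a $\varphi$-semilinear, $\Gamma$-commuting map on $\AKp/\mu$, and $\varphi(\mu)/\mu\equiv p\pmod{\mu}$. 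Combining this with the fact that $\gamma(\mu)/\mu\equiv\chi(\gamma)\pmod\mu$, one forces $\log\gamma$ to satisfy $\varphi\circ\log\gamma=\log\gamma\circ\varphi$ on a lattice where $\varphi$ is bijective after inverting $p$. The Sen-type weight argument, namely that $\Gamma$ acts on $A_\inf/\mu$ with $\gamma-1$ topologically nilpotent in a way compatible with $\theta\varphi^{-m}$ for every $m$, then shows $\log\gamma=0$. This final vanishing is the step I expect to be delicate.
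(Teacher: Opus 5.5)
There is a genuine gap, and it comes from using the wrong direction of Frobenius. Your key claim, that $\mu A_\inf$ is the intersection of the kernels of $\theta\circ\varphi^{-m}$ for $m\ge 0$, is false. As you compute yourself, $\theta(\varphi^{-m}(\mu))=\zeta_{p^m}-1\neq 0$ for $m\ge 1$. Your first paragraph actually had the right objects: $\xi_m=\varphi^{-m}(\xi)$ generates $\ker(\theta\circ\varphi^{m})$, and the identity $\mu=\varphi^{-r}(\mu)\prod_{j=0}^{r-1}\varphi^{-j}(\xi)$ shows that all of these divide $\mu$. The correct statement, Fontaine's Lemma 5.1.4, which the paper uses, is $\mu A_\inf=\bigcap_{i\ge 0}\ker(\theta\circ\varphi^{i})$, with nonnegative powers.

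Working with $\varphi^{-m}$ takes you out of $\AKp$, which is not $\varphi^{-1}$-stable. It also lands you in fields $K_{m+c}$ that grow with $m$, which is exactly why you found no uniform $n$. The fallback does not repair this. A continuous $\Gamma_K$-action on a finite free $W(k_F)$-module commuting with a $\sigma$-semilinear $\varphi$ can be a nontrivial character. For example, take $W(k_F)$ with $\varphi=\sigma$ and $\gamma$ acting by $\chi(\gamma)^r$; this is the kind of twist that appears in Wach's theorem. So the commutation of $\log\gamma$ with $\varphi$ forces nothing. Moreover, $\gamma(\mu)/\mu\equiv\chi(\gamma)\pmod{\mu}$ is information about $\mu\AKp/\mu^2\AKp$, not about $\AKp/\mu$. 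The unspecified ``Sen-type weight argument'' is where all the content would have to be.

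The missing idea is that \emph{positive} powers of $\varphi$ preserve $\AKp$, so everything is controlled by the single ring $\theta(\AKp)$.
Since $\AKp$ is finite free over $\AFp=W(k_F)[[\mu]]$ and $\theta(\AFp)=W(k_F)$, the ring $\theta(\AKp)\subseteq\theta(W(\mathcal{O}^\flat_{\widehat{K_\infty}}))=\mathcal{O}_{\widehat{K_\infty}}$ is a finitely generated $W(k_F)$-module. Its elements are algebraic over $F$ and fixed by $H_K$, hence lie in $\overline{K}^{H_K}=K_\infty$. So $\theta(\AKp)\subseteq\mathcal{O}_{K_n}$ for a single $n$.
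Then $\theta(\varphi^{i}(\AKp))\subseteq\theta(\AKp)\subseteq\mathcal{O}_{K_n}$ for every $i\ge 0$. The map $a\mapsto(\theta(\varphi^{i}(a)))_{i\ge 0}$ is therefore a $\Gamma_{K_n}$-equivariant map $\AKp/\mu\to\prod_{i\ge 0}\mathcal{O}_{K_n}$. It is injective by Fontaine's lemma together with \cref{lem:AKp/mu injets A_inf/mu}. Since $\Gamma_{K_n}$ acts trivially on the target, it acts trivially on $\AKp/\mu$, and no continuity or weight argument is needed.
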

\begin{proof}
Consider the homomorphism  
\begin{align*}
     f\colon \AKp/\mu  \rightarrow A_\inf/\mu  \rightarrow \prod_{i\geq 0}\mathcal{O}_{\mathbb{C}_p},\qquad a\mapsto a\mapsto (\theta\circ \varphi^i(a))_{i\geq 0}
\end{align*}
where $\theta\colon A_\inf  \rightarrow \mathcal{O}_{\mathbb{C}_p}$ is the Fontaine's map. 
By \cite[5.1.4. Lemme.]{Fontaine1994} and \cref{lem:AKp/mu injets A_inf/mu}, $f$ is injective. Since $\theta(\AFp)=W(k_F)$ and $\AKp$ is a finite free $\AFp$-module by \cref{prop:finite freenness of AKp}, $\theta(\AKp)$ is a finitely generated $W(k_F)$-module. 
Moreover, $\theta(\AKp)$ is a subring of $\theta(W(\mathcal{O}_{\widehat{K_\infty}}^\flat))=\mathcal{O}_{\widehat{K_\infty}}$.  As $\overline{K}\cap \widehat{K_\infty}\subseteq \overline{K}^{H_K}=K_\infty$, we conclude that $\theta(\AKp)\subseteq \mathcal{O}_{K_n}$ for $n\gg 1$. Therefore, we obtain an injection
\begin{align*}
     f\colon \AKp/\mu \lhook\joinrel\rightarrow \prod_{i\geq 0}\mathcal{O}_{K_n},\qquad a\mapsto (\theta \circ \varphi^i(a))_{i\geq 0}.
\end{align*}
This homomorphism is $\Gamma_{K_n}$-equivariant, so we get the conclusion.
\end{proof}

\begin{proposition}[cf. {\cite[PROPOSITION 3.1.]{Liu2008}}]\label{prop:(phi Gamma) module corres to regular rep}
Let $L/K$ be a finite Galois extension of $p$-adic fields such that $K_\infty\cap L=K$. Then the $(\varphi,\Gamma)$-module over $\AK$ corresponding to the $\mathbb{Z}_p$-representation $\mathbb{Z}_p[\Gal \left(L/K\right)]$ of $G_K$ is $\AL$ equipped with the canonical Frobenius endomorphism and the $\Gamma_K$-action defined via the isomorphism $\Gamma_L  \xrightarrow{\cong } \Gamma_K$.
\end{proposition}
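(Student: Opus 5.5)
The plan is to compute the $(\varphi,\Gamma)$-module directly from Fontaine's functor $D(V)=(\mathbb{A}\otimes_{\mathbb{Z}_p}V)^{H_K}$ with $V=\mathbb{Z}_p[\Gal(L/K)]$. The representation $\mathbb{Z}_p[\Gal(L/K)]$ is the induced representation $\mathrm{Ind}_{G_L}^{G_K}\mathbb{Z}_p$, and it is naturally identified with the space of functions $\Gal(L/K)\to\mathbb{Z}_p$ with $G_K$ acting by translation. Hence
\begin{align*}
(\mathbb{A}\otimes_{\mathbb{Z}_p}\mathbb{Z}_p[\Gal(L/K)])^{H_K}\cong \operatorname{Map}_{G_K}(G_K/G_L,\mathbb{A})^{H_K}.
\end{align*}
Evaluation at the identity coset identifies $\operatorname{Map}_{G_K}(G_K/G_L,\mathbb{A})$ with $\mathbb{A}^{G_L}$-type data. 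More concretely, I will evaluate at the identity coset and use that $H_K$ acts on $G_K/G_L=\Gal(L/K)$ through its image $\Gal(L/L\cap K_\infty)=\Gal(L/K)$. This is where the hypothesis $K_\infty\cap L=K$ is used: $H_K$ surjects onto $\Gal(L/K)$, so $H_K$ acts transitively on the cosets with stabilizer $H_K\cap G_L=H_L$. The $H_K$-invariant $G_K$-equivariant functions are therefore determined by their value at the identity, which lies in $\mathbb{A}^{H_L}=\AL$. This gives an isomorphism of $\AK$-modules $D(V)\cong \AL$.

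Next I will check the structures. Frobenius commutes with the Galois action on $\mathbb{A}$ and acts trivially on $V$, so it corresponds to the canonical Frobenius on $\AL$. For the $\Gamma_K$-action, an element of $\Gamma_K=G_K/H_K$ acts on $D(V)$ through any lift $g\in G_K$. Since $G_K=H_KG_L$ (again by $K_\infty\cap L=K$), I can choose the lift $g\in G_L$. For such $g$, the value at the identity coset transforms simply by $g$ acting on $\AL$. This is exactly the action of $\Gamma_L=G_L/H_L$ on $\AL$, transported via the restriction isomorphism $\Gamma_L\to\Gamma_K$, which is an isomorphism because $L\cap K_\infty=K$ gives $L_\infty/K_\infty$ of degree $[L:K]$ and hence $\Gal(L_\infty/L)\cong\Gal(K_\infty/K)$.

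Finally, I need to verify compatibility with the identification $D(V)\otimes_{\AK}\mathbb{A}\cong\mathbb{A}\otimes V$, or simply invoke Fontaine's equivalence, since $D$ is defined by the formula above.

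The main obstacle is the bookkeeping in the middle step: justifying that $H_K\cap G_L=H_L$ and that $H_K\to\Gal(L/K)$ is surjective. Both follow from Galois theory with $K_\infty L=L_\infty$ and $K_\infty\cap L=K$. One also has to be careful that the lift $g$ can be taken in $G_L$ so that the two actions match, which includes checking that the answer is independent of the chosen lift modulo $H_K$.
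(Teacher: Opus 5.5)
Your proposal is correct and is essentially the paper's argument. The paper writes the $H_K$-invariants of $\mathbb{A}\otimes_{\mathbb{Z}_p}\mathbb{Z}_p[\Gal(L/K)]$ as elements $\sum_g g(a)\cdot g$ with $a\in\AL$, which is your "determined by the value at the identity coset, lying in $\mathbb{A}^{H_L}$". It then matches the $\Gamma_K$-action by choosing a lift in $\Gamma_L$, which acts trivially on $\Gal(L/K)$, just as you choose $g\in G_L$. One small slip: the relevant space is $\operatorname{Map}(G_K/G_L,\mathbb{A})^{H_K}$, i.e.\ $H_K$-equivariant maps, not $G_K$-equivariant ones (those would give only $\mathbb{A}^{G_L}$); your subsequent computation already uses the correct condition.
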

\begin{proof}
We put $H_{L/K}:=\Gal \left(L_\infty/K_\infty\right)$.
\[\begin{tikzcd}
&L_\infty& \\ 
L\arrow[ru, dash,"\Gamma_L"]&&K_\infty \arrow[lu, dash, "H_{L/K}"']\\ 
&K \arrow[ru, dash,"\Gamma_K"']\arrow[lu, dash,"\Gal \left(L/K\right)"]&
\end{tikzcd}\]
By assumption, we have isomorphisms $H_{L/K}\xrightarrow{\cong}  \Gal \left(L/K\right)$ and $\Gamma_L \xrightarrow{\cong } \Gamma_K$. We identify them through these isomorphisms.
Let $D(-)$ denote the functor from the category of free $\mathbb{Z}_p$-representations of $G_K$ to the category of free \'{e}tale $(\varphi,\Gamma)$-modules over $\AK$ defined in \cite[A 3.4.2]{Fontaine1990}.
We have
\begin{align*}
     D(\mathbb{Z}_p[\Gal \left(L/K\right)])=\bigl(\mathbb{Z}_p[\Gal \left(L/K\right)]\otimes_{\mathbb{Z}_p}\mathbb{A}\bigr)^{H_K}=\bigl(\AL[\Gal \left(L/K\right)]\bigr)^{H_{L/K}}.
\end{align*}
On the one hand, for any $\sum_{g\in H_{L/K}}a_g\cdot g\in \AL[\Gal(L/K)]$, where $a_g\in \AL$ for each $g\in H_{L/K}$, we have
\begin{alignat*}{3}
     &&h\left(\sum_{g\in H_{L/K}}a_g\cdot g\right)&=\sum_{g\in H_{L/K}}a_g\cdot g&&\quad\text{for any $h\in H_{L/K}$}\\
      \Leftrightarrow \quad&& \sum_{g\in H_{L/K}} h(a_g)\cdot hg &=\sum_{g\in H_{L/K}}a_g\cdot g&&\quad\text{for any $h\in H_{L/K}$}\\
       \Leftrightarrow\quad&& h(a_g)&=a_{hg} &&\quad\text{for any $h,g\in H_{L/K}$}\\
        \Leftrightarrow\quad&& a_g&=g(a_1) &&\quad\text{for any $g\in H_{L/K}$}.
\end{alignat*}
Thus, every element of $\bigl(\AL[\Gal \left(L/K\right)]\bigr)^{H_{L/K}}$ can be uniquely expressed by $\sum_{g\in H_{L/K}}g(a)\cdot g$ for some $a\in\AL$. On the other hand, every element of the form $\sum_{g\in H_{L/K}}g(a)\cdot g$ for some $a\in\AL$ in $\AL [\Gal(L/K)]$ is fixed by $H_{L/K}$. Hence we obtain a bijective $\AK$-linear map
\begin{align*}
    f\colon \AL  \rightarrow \bigl(\AL[\Gal \left(L/K\right)]\bigr)^{H_{L/K}},\qquad a\mapsto \sum_{g\in H_{L/K}} g(a)\cdot g.
\end{align*}

It suffices to show that $f$ is compatible with the Frobenii and the $\Gamma_K$-actions. We claim that the Frobenius map and the $\Gamma_K$-action on $\bigl(\AL[\Gal \left(L/K\right)]\bigr)^{H_{L/K}}$ are given by
\begin{align*}
     \varphi\left(\sum_{g\in H_{L/K}}g(a)\cdot g\right)=\sum_{g\in H_{L/K}} \varphi(g(a))\cdot g\\
     \gamma\left(\sum_{g\in H_{L/K}}g(a)\cdot g\right)=\sum_{g\in H_{L/K}} \gamma g(a)\cdot  g
\end{align*}
for any $\gamma\in \Gamma_K$. Here $\Gamma_K$ acts on $\AL$ via the isomorphism $\Gamma_L \xrightarrow{\cong } \Gamma_K$. The claim for the Frobenius map is clear by definition. For any $\gamma\in \Gamma_K$, let $\gamma'\in \Gamma_L$ be the corresponding element through $\Gamma_L \xrightarrow{\cong} \Gamma_K$. Then, $\gamma'\in \Gamma_L \subseteq \Gal \left(L_\infty/K\right)$ is a lift of $\gamma$ under $\Gal(L_\infty/K)\twoheadrightarrow \Gamma_K$. We see that the $\gamma$-action is given by the canonical $\gamma'$-action on $\AL$ and the $\gamma'$-action on $\Gal (L/K)$ through $\Gal(L_\infty/K)\twoheadrightarrow \Gal(L/K)$, which is trivial. Hence, we get the desired formula.

Since the Frobenius map on $\AL$ is $H_{L/K}$-equivariant, $f$ is compatible with the Frobenii. We have $\Gamma_L\times H_{L/K}\xrightarrow{\cong} \Gal \left(L_\infty/K\right)$. Therefore, for any $\gamma\in \Gamma_K$, we have
\begin{align*}
     \gamma f(a)=\gamma\left(\sum_{g\in H_{L/K}}g(a)\cdot g\right)&=\sum_{g\in H_{L/K}} \gamma g(a)\cdot  g
     =\sum_{g\in H_{L/K}} g \gamma(a)\cdot g
     = f( \gamma(a)).
\end{align*}
\end{proof}

\begin{lemma}\label{lem:injection of (phi Gamma) module}
For any $p$-adic field $K$, the canonical homomorphisms
\begin{align*}
     \AKp/p \otimes_{\AFp/p}\mathbb{E}_F  \rightarrow \mathbb{E}_K\\
     \AKp \otimes_{\AFp} \AF  \rightarrow \AK
\end{align*}
are injective.
\end{lemma}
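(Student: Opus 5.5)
I will first prove the mod $p$ statement and then deduce the integral one by a $p$-adic dévissage. Put $\mathbb{E}_F^+:=\AFp/p=k_F[[\epsilon-1]]$, a complete discrete valuation ring with fraction field $\mathbb{E}_F=\mathbb{E}_F^+[1/(\epsilon-1)]$. Since $\mathbb{E}_F$ is a localization of $\mathbb{E}_F^+$, the functor $-\otimes_{\mathbb{E}_F^+}\mathbb{E}_F$ is exact. By \cref{lem:mod p injection of AKp} we have an injection $\AKp/p\hookrightarrow \mathbb{E}_K^+$, which stays injective after tensoring with $\mathbb{E}_F$. It therefore suffices to check that $\mathbb{E}_K^+\otimes_{\mathbb{E}_F^+}\mathbb{E}_F\to\mathbb{E}_K$ is injective. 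This map is just $\mathbb{E}_K^+\otimes_{\mathbb{E}_F^+}\mathbb{E}_F=\mathbb{E}_K^+[1/(\epsilon-1)]\to\mathbb{E}_K$, a localization map of the domain $\mathbb{E}_K^+$ into its fraction field, so it is injective. This gives the first map.

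For the second map, set $M:=\AKp\otimes_{\AFp}\AF$. By \cref{prop:finite freenness of AKp}, $\AKp$ is finite free over $\AFp$, say of rank $d$, so $M\cong\AF^d$ is $p$-adically complete, separated and $p$-torsion free. Note that $\AF=(\AFp[1/\mu])^\wedge_p$, and $\AFp/p\to\AF/p$ is $\mathbb{E}_F^+\to\mathbb{E}_F$. Hence $M/p=\AKp/p\otimes_{\AFp/p}\mathbb{E}_F$, and the reduction mod $p$ of the map $g\colon M\to\AK$ is exactly the first map, which we have shown is injective.

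Now let $x\in\ker g$ be nonzero. Because $M$ is $p$-adically separated, we can write $x=p^n y$ with $y\notin pM$. Since $\AK\subseteq W(\mathbb{C}_p^\flat)$ is $p$-torsion free, $g(y)=0$. Then $y\bmod p$ lies in the kernel of $M/p\to\AK/p$, and this kernel is zero, so $y\in pM$. This is a contradiction, so $g$ is injective. Only $p$-torsion freeness of $\AK$ and separatedness of $M$ are used here; completeness is not needed.

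The step needing most care is the identification $M/p\cong\AKp/p\otimes_{\AFp/p}\mathbb{E}_F$, that is, $\AF/p\AF=\mathbb{E}_F$ compatibly with the maps. This holds by the construction of $\AF$ as a Cohen ring of $\mathbb{E}_F$ with $\AF/p\cong\mathbb{E}_F$ via the inclusion into $W(\mathbb{C}_p^\flat)$, together with \cref{lem:main thm for unramified extension}, which identifies $\AFp/p$ with $k_F[[\epsilon-1]]$. The remaining steps are formal.
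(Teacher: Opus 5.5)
Your proof is correct and is essentially the paper's argument: you get injectivity modulo $p$ from \cref{lem:mod p injection of AKp} together with exactness of the localization $-\otimes_{\AFp/p}\mathbb{E}_F$, and then lift to the integral map using $p$-torsion-freeness of $\AK$ and $p$-adic separatedness of $\AKp\otimes_{\AFp}\AF$, which the paper invokes in one line and you spell out. The only cosmetic difference is that the paper tensors the injection $\AKp/p\hookrightarrow\mathbb{E}_K$ directly and uses $\mathbb{E}_K\otimes_{\AFp/p}\mathbb{E}_F=\mathbb{E}_K$, whereas you pass through $\mathbb{E}_K^+[1/(\epsilon-1)]$.
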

\begin{proof}
Since $\AKp \otimes_{\AFp} \AF$ and $\AK$ are $p$-torsion free and $p$-adically complete and separated by \cref{prop:finite freenness of AKp}, it suffices to show the injectivity of the first homomorphism. By \cref{lem:mod p injection of AKp}, we have the injection $\AKp/p \lhook\joinrel\rightarrow \mathbb{E}_K$. Note that $\mathbb{E}_F=\AFp/p[1/(\epsilon-1)]$ by \cref{lem:main thm for unramified extension}. Therefore, we obtain an injection
\begin{align*}
     \AKp/p \otimes_{\AFp/p}\mathbb{E}_F\lhook\joinrel\rightarrow \mathbb{E}_K \otimes_{\AFp/p}\mathbb{E}_F=\mathbb{E}_K
\end{align*}
as desired.
\end{proof}

Before proving \cref{prop:weaker main theorems} (1), let us recall \cite[TH\'{E}OR\`{E}ME, REMARQUES on page 381]{Wach1996}.

\begin{definition}[{\cite[A.5.]{Wach1996}}]\label{def:Wach A5}
Let $K$ be a totally ramified finite extension of $F$. For any $(\varphi,\Gamma)$-module $M$ over $\BK$, $j_\ast M$ denotes the union of finitely generated sub-$\AFp$-modules of $M$ stable under the Frobenius map.
\end{definition}

The canonical homomorphism $j_\ast M  \otimes_{\AFp[1/p]} \BK \rightarrow M$ is always injective.

\begin{definition}[{\cite[A.5.]{Wach1996}}]\label{def:finite height}
     Let $K$ be a totally ramified finite extension of $F$. A $(\varphi,\Gamma)$-module $M$ over $\BK$ is {\em of finite height} if the canonical injective homomorphism
     \begin{align*}
          j_\ast M  \otimes_{\AFp[1/p]} \BK \lhook\joinrel\rightarrow  M
     \end{align*}
     is an isomorphism.
\end{definition}

\begin{theorem}[{\cite[TH\'{E}OR\`{E}ME, REMARQUES on page 381]{Wach1996}}]\label{thm:Wach}
Let $K$ be a totally ramified finite extension of $F$ contained in $F_\infty$. Let $V$ be a $p$-adic Galois representation of $G_K$ of rank $d$ and let $M$ be the corresponding $(\varphi,\Gamma)$-module over $\mathbb{B}_K$. Assume that $M$ is of finite height. Then, the following are equivalent.
\begin{enumerate}
     \item $V$ is potentially crystalline.
     \item[(4)] There exists an integer $r$ and a sub-$\AFp$-module $N$ of $j_\ast M$ such that $N$ is free of rank $d$, stable under the $\Gamma_K$-action, and the $\Gamma_K$-action is finite on $(N/\mu)(-r)$.
\end{enumerate}
Moreover, if the $\Gamma_K$-action on $(N/\mu)(-r)$ is trivial, then $V$ is crystalline.
\end{theorem}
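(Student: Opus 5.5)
The statement is \cite[TH\'{E}OR\`{E}ME]{Wach1996}, and the paper only quotes it. A proof from scratch would follow the Wach--Colmez--Berger strategy. Throughout I use the period rings $B_{\mathrm{cris}}$, $B^+_{\mathrm{rig}}$, the functor $D_{\mathrm{cris}}$ and the element $t=\log(1+\mu)$; none of these is defined in the paper.

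\emph{(4)$\Rightarrow$(1).} First twist by $\mathbb{Z}_p(r)$; since $\mu$ divides $t$ up to a unit, this replaces $(N/\mu)(-r)$ by $N/\mu$, so we may assume $r=0$. Next, $\Gamma_K$ acts on $N/\mu$ through a finite quotient, so there is $n$ with $\Gamma_{K_n}$ acting trivially on $N/\mu$. Since $K\subseteq F_\infty$, replacing $K$ by $K_n$ gives $\AK=\mathbb{A}_{K_n}$ and $\mathbb{B}_K=\mathbb{B}_{K_n}$ (cf.\ \cref{prop:subextension of p-cyclotomic extension}). The module $N$ is then a free $\AFp$-module of rank $d$, stable under $\varphi$ after inverting a power of $\varphi(\mu)/\mu$ (finite height), with $\Gamma_{K_n}$ trivial mod $\mu$. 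In other words, it is a Wach module in Berger's sense over $W(k_F)[[\mu]]$, except that $\Gamma$ is replaced by the open subgroup $\Gamma_{K_n}$.

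Then run Berger's argument:
\begin{enumerate}
\item Set $D:=(N[1/p]\otimes_{\AFp}B^+_{\mathrm{rig}})^{\Gamma_{K_n}}$.
\item Because $\Gamma_{K_n}$ is trivial mod $\mu$, the operator $\nabla=\log\gamma$ sends $N$ into $\mu N$. Solving the resulting differential equation over $B^+_{\mathrm{rig}}$, using that the eigenvalues of $\nabla$ on successive quotients $\mu^kN/\mu^{k+1}N$ are $k$ up to units, shows that $D$ has dimension $d$ over $F$.
\item Comparing $D$ with $V\otimes B_{\mathrm{cris}}$ through $M\otimes_{\mathbb{B}_K}\widetilde{\mathbb{B}}$ shows that $V|_{G_{K_n}}$ is crystalline.
\end{enumerate}
It follows that $V$ is potentially crystalline. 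If $\Gamma_K$ itself acts trivially on $N/\mu$, no extension is needed and $V$ is crystalline, which is the last assertion.

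\emph{(1)$\Rightarrow$(4).} Let $L/K$ be finite with $V|_{G_L}$ crystalline, and twist so that the Hodge--Tate weights are $\le 0$.
\begin{enumerate}
\item Define
\begin{align*}
N:=\bigl\{x\in j_\ast M : x\in B^+_{\mathrm{rig}}\otimes_{L_0}D_{\mathrm{cris},L}(V)\ \text{inside}\ \widetilde{\mathbb{B}}_{\mathrm{rig}}\otimes V\bigr\}.
\end{align*}
\item Because $M$ is of finite height, $j_\ast M$ is a lattice. Hence $N$ is a finitely generated, $\Gamma_K$-stable sub-$\AFp$-module of $j_\ast M$.
\item $N$ is torsion free and reflexive over the 2-dimensional regular ring $\AFp$, by its intersection description, so $N$ is free.
\item $N$ has rank $d$. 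To see this, show that $N[1/\mu]$ recovers $j_\ast M[1/\mu]$. Use the weak admissibility of $D_{\mathrm{cris}}$ and the Frobenius regularization trick: from finite height, $\varphi^{m}(x)$ eventually lands in the $B^+$-part.
\item $\Gamma_K$ acts finitely on $N/\mu$. Embed $N/\mu$ into $\prod_i \bigl(L'\otimes D_{\mathrm{cris},L}(V)\bigr)$ via $\theta\circ\varphi^i$, in the style of \cref{prop:Gamma_F_n action is trivial mod mu}. There $\Gamma_K$ acts through the finite group $\Gal\left(L'/K\right)$ for a finite $L'\supseteq L$.
\end{enumerate}

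The main obstacle is step 4: showing that the intersection lattice $N$ has full rank $d$ and not smaller. This is exactly where finite height must be combined with the crystalline comparison, and I would try Berger's ``$\varphi$-regularization'' estimates on $\mu$-adic denominators first.
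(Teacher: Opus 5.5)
The paper does not prove this statement. It quotes it from Wach and uses only the last assertion, with $N=\AKp$, which is $\varphi$-stable. So your sketch has to stand on its own, and as written both directions have genuine gaps.

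\textbf{(1)$\Rightarrow$(4): your lattice is the wrong one.} Step 4 is not merely the hard step; it is false for your $N$.

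Take $K=F=\mathbb{Q}_p$ and $V=\mathbb{Q}_p(-1)$ with basis $e$. Use the convention in which $\mathbb{Q}_p(1)$ has weight $1$, so $D_{\mathrm{cris}}\subseteq\widetilde{\mathbb{B}}^+_{\mathrm{rig}}\otimes V$. Then $j_\ast M=\AFp[1/p]\,e$ and $D_{\mathrm{cris}}(V)=\mathbb{Q}_p\,te$. If $fe\in N$, then $f\in t\widetilde{\mathbb{B}}^+_{\mathrm{rig}}$, so $f(\zeta_{p^n}-1)=\theta\circ\varphi^{-n}(f)=0$ for every $n\geq 1$. A nonzero element of $\mathbb{Z}_p[[\mu]][1/p]$ has only finitely many zeros in the open disc, so $N=0$.

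With the opposite sign convention, $\widetilde{\mathbb{B}}^+_{\mathrm{rig}}\otimes V\subseteq\widetilde{\mathbb{B}}^+_{\mathrm{rig}}\otimes D_{\mathrm{cris}}(V)$, so your membership condition is automatic and $N$ is all of $j_\ast M$. For $V=\mathbb{Q}_p\oplus\mathbb{Q}_p(1)$, the group $\Gamma_F$ then acts on $N/\mu$ by $1\oplus\chi$, which no twist makes finite. So step 5 fails instead: $e_1=t\cdot t^{-1}e_1$ is killed by every $\theta\circ\varphi^i$, so $N/\mu\to\prod_i$ is not injective.

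The correct lattice (Berger's $N(V)$ for $V|_{G_{K_n}}$) is cut out by conditions involving the Hodge filtration at the points $\zeta_{p^n}-1$, not by integrality in $\widetilde{\mathbb{B}}^+_{\mathrm{rig}}\otimes D_{\mathrm{cris}}$. Showing that such a lattice lies in $j_\ast M$ with rank $d$ is the real content of the theorem, and your proposal does not supply it.

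Note also that $j_\ast M$ is an $\AFp[1/p]$-module: if $N_1$ is finitely generated and $\varphi$-stable, so is $p^{-1}N_1$. Hence your $N$ is not finitely generated over $\AFp$ until you intersect it with an integral lattice such as $D(T)$.

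\textbf{(4)$\Rightarrow$(1): the reduction to a Wach module is unjustified.} Condition (4) does not require $N$ to be $\varphi$-stable, and it need not be, even after inverting $p$ and $q:=\varphi(\mu)/\mu$. For $K=F$ and $V=\mathbb{Q}_p$, the module $N=\varphi(q)\AFp\subseteq j_\ast M$ is $\Gamma_F$-stable with trivial action on $N/\mu$, since $\gamma(\varphi(q))/\varphi(q)$ is a unit of $\AFp$ congruent to $1$ mod $\mu$. Yet $\varphi^2(q)\notin\varphi(q)\AFp[1/p,1/q]$.

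This matters because your step 2 uses only $\Gamma$, and $\Gamma$ alone cannot give $\dim D=d$. Take $s\in\mathbb{Z}_p\setminus\mathbb{Z}$ and let $\gamma(e)=\bigl(\gamma(\mu)/(\chi(\gamma)\mu)\bigr)^s e$ on $\AFp e$. This action is trivial mod $\mu$, but up to scalars its only formal invariant is $(t/\mu)^s e$. That function is not meromorphic at $\zeta_p-1$, so $(\mathbb{B}^+_{\mathrm{rig},F}[1/t]\otimes\AFp e)^{\Gamma}=0$.

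The $\mu$-adic successive approximation only produces formal solutions, convergent at best on a small disc. It is the Frobenius that forces integral exponents and propagates solutions to the whole open disc.

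One repair, granting that $j_\ast M$ is finitely generated of rank $d$ under finite height:
\begin{enumerate}
\item $j_\ast M/N[1/p]$ is torsion with $\Gamma$-stable support, hence supported on $\mu$ and the $\varphi^k(q)$.
\item Therefore $\mathbb{B}^+_{\mathrm{rig},F}[1/t]\otimes N=\mathbb{B}^+_{\mathrm{rig},F}[1/t]\otimes j_\ast M$.
\item This module carries a Frobenius that is bijective after linearization, and only at that point can Berger's argument be run.
\end{enumerate}
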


We finally prove \cref{prop:weaker main theorems} (1). The proof is based on the ideas of L. Berger and T. Tsuji.

\begin{proposition}[$=$ \cref{prop:weaker main theorems} (1)]\label{prop:weaker main theorems (1)}
Let $K$ be a finite Galois extension of $F$ such that $K_\infty/F_\infty$ is totally ramified, i.e., the residue field of $K_\infty$ coincides with that of $F_\infty$. Then, we have $\AFp =\AKp$.
\end{proposition}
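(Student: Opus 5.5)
The plan is to realize $\AKp$ as a lattice of Wach type inside the $(\varphi,\Gamma)$-module of a regular (Artin) representation, and then to use \cref{thm:Wach} to force that lattice into $\BE$-coefficients with $E=F$.

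\textbf{Step 1 (reduction).} Set $K':=K\cap F_\infty$. Since $F_\infty/F$ is abelian and totally ramified, $K'$ is a finite subextension of $F_\infty/F$, so $K'/F$ is totally ramified. Hence \cref{prop:subextension of p-cyclotomic extension} gives $\mathbb{A}_{K'}=\AF$ and $\mathbb{A}_{K'}^+=\AFp$. Moreover $K/K'$ is Galois and $K\cap K'_\infty=K'$. \cref{prop:(phi Gamma) module corres to regular rep} then identifies the $(\varphi,\Gamma)$-module over $\mathbb{A}_{K'}$ attached to $\mathbb{Z}_p[\Gal(K/K')]$ with $\AK$, via $a\mapsto\sum_g g(a)\cdot g$. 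We put $M:=\BK$, viewed as the $(\varphi,\Gamma)$-module of $V:=\mathbb{Q}_p[\Gal(K/K')]$.

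\textbf{Step 2 (the lattice $N$).} Let $N:=\AKp$. By \cref{prop:finite freenness of AKp} it is finite free over $\AFp$, and it is evidently stable under $\varphi$ and $\Gamma_{K'}$. So $N\subseteq j_\ast M$. By \cref{lem:injection of (phi Gamma) module}, $M':=N\otimes_{\AFp}\BF$ injects into $M$. It is a sub-$(\varphi,\Gamma)$-module, étale because it is a sub-object of an étale module, and it is of finite height by construction. Hence $M'$ corresponds to a subrepresentation $V'\subseteq V$.

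By \cref{prop:Gamma_F_n action is trivial mod mu}, $\Gamma_{K_n}=\Gamma_{F_n}$ acts trivially on $N/\mu$ for $n\gg1$. We then apply \cref{thm:Wach} over the base $F_n$, which is totally ramified over $F$ and contained in $F_\infty$, with $r=0$. Here $\mathbb{B}_{F_n}=\BF$, so $M'$ is still the module of $V'|_{G_{F_n}}$. The conclusion is that $V'|_{G_{F_n}}$ is crystalline.

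\textbf{Step 3 (killing the ramification).} $V'$ has finite image, factoring through $\Gal(K/K')$, so crystallinity over $F_n$ means $V'$ is unramified as a $G_{F_n}$-representation. The action of $G_{F_n}$ factors through $\Gal(KF_n/F_n)=\Gal(K_n/F_n)$. Since $K_\infty/F_\infty$ is totally ramified, $k_{K_n}=k_{F_n}$ for all $n$, so $K_n/F_n$ is totally ramified. Therefore $G_{F_n}$ acts trivially on $V'$, and in particular $H_F$ acts trivially on $V'$.

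It follows that $D(V')=(V'\otimes\mathbb{B})^{H_F}=V'\otimes_{\mathbb{Q}_p}\BF$. Also $V'\subseteq V^{H_{K/K'}}=\mathbb{Q}_p\cdot\sum_g g$. Pulling back through $f$, the elements of $M'$ are exactly the $a\in\BK$ with $g(a)=a$ for all $g\in H_{K/K'}$ and $a\in\BF$. Thus $N\subseteq\BF$, and since $N\subseteq A_\inf$ we get $N\subseteq \BF\cap A_\inf$.

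Finally, $W(\mathbb{C}_p^\flat)/A_\inf$ is $p$-torsion free (the argument in the proof of \cref{lem:main thm for unramified extension}). Hence $\BF\cap A_\inf=\AF\cap A_\inf=\AFp$, so $\AKp\subseteq\AFp$. The reverse inclusion is obvious.

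\textbf{Main obstacle.} I expect the hardest part to be the bookkeeping in Steps 2–3. One must check that \cref{thm:Wach} applies to $M'$ over $F_n$ with this $N$: its hypotheses are phrased for the full module of $V$, so one has to be sure that finite height and the identification $j_\ast M'\supseteq N$ survive passing to the subobject. One must also make sure that the identification through $f$ really pins down the $H_{K/K'}$-invariant line. If applying \cref{thm:Wach} to the subrepresentation is problematic, I would instead work with $V'$ directly, defined as the representation attached to the étale module $M'$, rather than as a subobject of $V$.
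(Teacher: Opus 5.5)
Your argument is correct and is essentially the paper's proof. You view $\AKp$ as a Wach-type lattice inside $\BK$, the $(\varphi,\Gamma)$-module of the regular representation, and use Berger's triviality of the $\Gamma$-action modulo $\mu$ together with Wach's theorem to get crystallinity, hence unramifiedness; total ramification of $K_n/F_n$ then kills the representation. The differences from the paper are only bookkeeping: you restrict to $G_{F_n}$ instead of replacing $K$ by $K_n$, you pass to a subobject of $\BK$ instead of checking continuity directly, and you read off $M'\subseteq \mathbb{B}_F$ via $f$ instead of using the paper's rank-one/unit argument.

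One small correction: the final equality $\mathbb{B}_F\cap A_\inf=\AFp$ does not follow from $W(\mathbb{C}_p^\flat)/A_\inf$ being $p$-torsion free. It needs $\AF$ to be $p$-saturated in $W(\mathbb{C}_p^\flat)$, which follows from $\AF/p=\mathbb{E}_F\hookrightarrow\mathbb{C}_p^\flat$ and the $p$-torsion freeness of $W(\mathbb{C}_p^\flat)$.
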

\begin{proof}
Let $E:=F_\infty\cap K$. By \cref{prop:Gamma_F_n action is trivial mod mu}, there exists $n\geq 1$ such that the $\Gamma_{K_n}$-action on $\AKp/\mu$ is trivial and $E\subseteq F_n$. 
\[\begin{tikzcd}
&K_\infty \\ 
F_\infty\arrow[ur, dash]&K_n\arrow[u, dash] \\ 
F_n\arrow[u, dash]\arrow[ur, dash]&K\arrow[u, dash] \\ 
E=F_\infty\cap K\arrow[u, dash]\arrow[ur, dash]& \\ 
F \arrow[u, dash]&
\end{tikzcd}\]
Since $\Gal \left(K_\infty/K\right) \xrightarrow{\cong }\Gal \left(F_\infty/E\right)$ and $\Gal \left(K_n/K\right) \xrightarrow{\cong } \Gal \left(F_n/E\right) $, we have $\Gal \left(K_\infty/K_n\right)\cong \Gal \left(F_\infty/F_n\right)$. Hence we see that $F_\infty \cap K_n=F_n$. 
We know that $K_n$ is a finite Galois extension of $F$ and $\AKp=\mathbb{A}_{K_n}^+$ by \cref{prop:subextension of p-cyclotomic extension}. Thus we may replace $K$ by $K_n$. 
Then $E$ is replaced by $F_n$. In summary, we are reduced to the following settings: $K$ is a finite Galois extension of $F$ such that $K_\infty/F_\infty$ is totally ramified. 
We put $E:=F_\infty \cap K$. The $\Gamma_K$-action on $\AKp/\mu$ is trivial. Note that $\AF=\AE$, $\AFp=\AEp$ and $\mathbb{E}_F=\mathbb{E}_E$ by \cref{prop:subextension of p-cyclotomic extension}.

Consider $M:=\AKp\otimes_{\AEp} \AE$. We claim that $M$ is a $(\varphi,\Gamma)$-module over $\AE$. By \cref{prop:finite freenness of AKp}, $M$ is a finite free $\AE$-module and it has a canonical Frobenius map and a $\Gamma_E$-action through the isomorphism $\Gamma_K\xrightarrow{\cong} \Gamma_E$. By \cref{lem:injection of (phi Gamma) module}, we have an injection $M\lhook\joinrel\rightarrow \AK$ compatible with the Frobenii and the $\Gamma_E$-action. We also have the following commutative diagram induced by the Frobenii:
\[\begin{tikzcd}
\bigl(\AKp/p \otimes_{\AEp/p}\mathbb{E}_E\bigr)\otimes_{\mathbb{E}_E,\varphi}\mathbb{E}_E\arrow[r]\arrow[d,hook]&\AKp/p \otimes_{\AEp/p}\mathbb{E}_E\arrow[d, hook] \\ 
\mathbb{E}_K\otimes_{\mathbb{E}_E,\varphi}\mathbb{E}_E\arrow[r]&\mathbb{E}_K.
\end{tikzcd}\]
By \cref{prop:(phi Gamma) module corres to regular rep}, $\AK$ is a $(\varphi,\Gamma)$-module over $\AE$, thereby the bottom arrow is an isomorphism. Hence the top arrow is injective. As $\dim_{\mathbb{E}_E}\bigl(\AKp/p \otimes_{\AEp/p}\mathbb{E}_E\bigr)\otimes_{\mathbb{E}_E,\varphi}\mathbb{E}_E=\dim_{\mathbb{E}_E}\AKp/p \otimes_{\AEp/p}\mathbb{E}_E$, it is an isomorphism. 
Since $M$ is $p$-torsion free and $p$-adically complete and separated, this implies that the Frobenius map $M \otimes_{\AE, \varphi}\AE \rightarrow M$ is an isomorphism.

Before proving the continuity of the $\Gamma_E$-action on $M$, let us recall the definition of the topologies on these rings and modules. The topology on $\AEp$ (resp. $\AE$) is defined as the subspace topology induced from $A_\inf$ (resp. $W(\mathbb{C}_p^\flat)$) that is equipped with the weak topology. The topology on $\AEp$ is equal to the subspace topology induced from $\AE$. 
We claim that the topology on $\AEp$ is equal to the $(p,\mu)$-adic topology. It is known that the weak topology on $A_\inf$ coincides with the $(p, \mu)$-adic topology, hence it suffices to prove that $\AEp/(p^m,\mu^n)\rightarrow A_\inf/(p^m,\mu^n)$ is injective for any $n,m\in \mathbb{Z}_{\geq 1}$. By \cite[Lemma 2.10]{Watanabe2026}, $A_\inf/\mu^n$ is $p$-torsion free for any $n\in \mathbb{Z}_{\geq 1}$. Hence, it is enough to show the injectivity of $\AEp/(p,\mu^n)\rightarrow A_\inf/(p,\mu^n)$. When $n=1$, this is trivial since $\AEp/(p,\mu)$ is a field whose image in $A_\inf/(p,\mu)$ is nonzero. As $A_\inf/p$ is $\mu$-torsion free, this implies the injectivity for any $n\in \mathbb{Z}_{\geq 1}$, as desired. Therefore, the topology on $\AEp$ coincides with the $(p,\mu)$-adic topology. 
The finite free $\AE$-module $M$ has the product topology. Hence, by \cite[Proposition 1.11]{Watanabe2026}, it is enough to show the continuity of the $\Gamma_E$-action on $\AKp$ equipped with the product topology as a finite free $\AEp$-module, which coincides with the $(p,\mu)$-adic topology. Since the $\Gamma_E$-action on $\AKp$ is trivial modulo $\mu$, the desired continuity follows from \cite[Lemma 1.3]{MorrowTsuji2024} and \cite[Lemma 1.12]{Watanabe2026}. Therefore, $M$ is a $(\varphi,\Gamma)$-module over $\AE$.

We define $M_{\mathbb{Q}_p}:=M\otimes_{\AE}\BE$, which is an \'{e}tale $(\varphi,\Gamma)$-module over $\BE$. We next show that the $p$-adic Galois representation $V(M_{\mathbb{Q}_p})$ of $G_E$ corresponding to $M_{\mathbb{Q}_p}$ is the trivial representation $\mathbb{Q}_p$. By \cref{prop:finite freenness of AKp}, the canonical map $\AKp  \rightarrow M$ is injective. Since $j_\ast M_{\mathbb{Q}_p}$ contains $\AKp$, $M_{\mathbb{Q}_p}$ is of finite height. As the $\Gamma_E$-action on $\AKp/\mu$ is trivial, $V(M_{\mathbb{Q}_p})$ is a crystalline representation of $G_E$ by \cref{thm:Wach}.
Since $M_{\mathbb{Q}_p}\lhook\joinrel\rightarrow \BK$ as $(\varphi,\Gamma)$-modules over $\BE$, we see that 
\begin{align*}
     V(M_{\mathbb{Q}_p})\lhook\joinrel\rightarrow \mathbb{Q}_p[\Gal \left(K/E\right)]
\end{align*}
by \cref{prop:(phi Gamma) module corres to regular rep}.
Hence, $V(M_{\mathbb{Q}_p})$ is unramified by \cite[Cor. 9.3.2]{BrinonConrad2009}.  Let $I_E$ denote the inertia group of $E$. As $K/E$ is totally ramified, $I_E \lhook\joinrel\rightarrow G_E \twoheadrightarrow \Gal \left(K/E\right)$ is surjective. Thus,
\begin{align*}
     V(M_{\mathbb{Q}_p})\lhook\joinrel\rightarrow \bigl(\mathbb{Q}_p[\Gal \left(K/E\right)]\bigr)^{I_E}=\mathbb{Q}_p.
\end{align*}
We also have $\BE \lhook\joinrel\rightarrow M_{\mathbb{Q}_p}$ as $\AEp  \rightarrow \BE$ is flat, so we have an injection $\mathbb{Q}_p \lhook\joinrel\rightarrow V(M_{\mathbb{Q}_p})$. Therefore, $\BE \lhook\joinrel\rightarrow M_{\mathbb{Q}_p}$ is an isomorphism and hence $\AKp$ is a finite free $\AEp$-module of rank $1$. There exists $u\in \AKp$ such that $\AKp=\AEp\cdot u$. Then, there exists $v\in \AEp$ such that $1=vu$. This implies $\AEp = \AKp$.
\end{proof}

\section*{Appendix 1: The integral closure of $\AFp$ in $\AK$}
Another candidate for a coefficient ring of ``Wach modules'' in the ramified case might be the $p$-adic completion of the integral closure of $\AFp$ in $\AK$, as considered in \cite{Wu2021}. We explain why it does not work well, even in the unramified case. Roughly speaking, this is because the integral closure of $\AFp$ in $\AK$ is very large by virtue of Hensel's lemma.

Let $C$ be the integral closure of $\AFp$ in $\AK$ and $\widehat{C}$ be its $p$-adic completion. Note that $C$ is a subring of $\AK$, but a priori there is no reason for $\widehat{C}$ to be a subring of $\AK$. We claim that the canonical map $C/pC\cong\widehat{C}/p \widehat{C}\rightarrow \AK/p\AK= \mathbb{E}_K$ is not injective, in particular $\widehat{C}/p\widehat{C}$ is not isomorphic to $\mathbb{E}_K^+$. (Thus, \cite[Lemma 2.5]{Wu2021} does not hold.) The following argument was independently found by the author and Dylan Pentland.

Take a $1$-variable polynomial $f(X) = X^2 - \mu X + p$ in $\AFp[X]$. Modulo $p$, we see that $0$ is a simple root in $\mathbb{E}_F$. Using Hensel's lemma on the complete discrete valuation ring $\AF$, there is an element $x\in \mathbb{A}_F$ satisfying $f(x)=0$ and $x \equiv 0 \bmod p\AF$. The element $x\in \AF\subseteq \AK$ is integral over $\AFp$ and hence an element of $C$. Since $x \equiv 0 \bmod p\AF$, $\overline{x}\in C/pC \cong\widehat{C}/p\widehat{C}$ is contained in the kernel of the map $\widehat{C}/p\widehat{C} \rightarrow \mathbb{E}_K$. We prove that $x$ is not zero in $C/pC \cong \widehat{C}/p\widehat{C}$.

Since $x \equiv 0 \bmod p\AF$, $x = py$ for some $y$ in $\AF$. Then $y$ satisfies $f(py) = p^2 y^2 - \mu p y + p = 0$ in $\AF$, and since $p$ is not a zero divisor this simplifies to $p y^2 - \mu y + 1 = 0$. We then see $y\equiv \mu^{-1}\bmod p\AF$. In particular, $y\notin C$: if it were, the reduction modulo $p\AF$ would remain integral over $\mathbb{E}_F^+$, which is integrally closed in $\mathbb{E}_F$. It also follows that the class of $x$ is nonzero in $C/pC$: if it were zero, $x=pz$ for some $z$ in $C$ and then $py=pz \in \AK$ forcing $y=z$ as $p$ is not a zero divisor, but $y$ is not in $C$. Therefore, $x$ is a nonzero element in $C/pC \cong \widehat{C}/p\widehat{C}$.

\section*{Appendix 2: A result of Dylan Pentland}
This section is due to Dylan Pentland\footnote{\textit{Email address}: \url{dpentland@math.harvard.edu}}. I thank him for allowing me to include it in this paper.

Let $F=\mathbb{Q}_p$ and $K=\mathbb{Q}_p(p^{1/e})$ where $\gcd(e,p(p-1))=1$ and $e>1$. In this appendix we describe a slight refinement of the discussion after Theorem 1.3 in \cite{Berger2014} to show that a subring $R \subset \mathbb{A}_K$ which is Frobenius-stable and satisfies $R/p \simeq \mathbb{E}_K^+$ does not exist for such $K$.
We will first need to obtain an explicit description of the field of norms of $K$ and the corresponding ring $\mathbb{A}_K$. 

\begin{lemma}
Let $F=\mathbb{Q}_p$ and $K=\mathbb{Q}_p(p^{1/e})$ where $\gcd(e,p(p-1))=1$ and $e>1$. Write $F_\infty$ for $\bigcup_n \mathbb{Q}_p(\zeta_{p^n})$, and similarly write $K_\infty$ for $\bigcup_n \mathbb{Q}_p(\zeta_{p^n},p^{1/e})$. Then $[K_\infty:F_\infty]=e$ and the residue field of $K_\infty$ is $\mathbb{F}_p$.
\label{lem:deg_e}
\end{lemma}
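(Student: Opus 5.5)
We compare the tower $K_\infty = F_\infty(p^{1/e})$ with $F_\infty$ by counting degrees and ramification. First we show that $[K_\infty:F_\infty]=e$. Since $X^e-p$ is Eisenstein over $\mathbb{Q}_p$, we have $[K:\mathbb{Q}_p]=e$. Also $[F_n:\mathbb{Q}_p]=(p-1)p^{n-1}$, which is coprime to $e$ by the hypothesis $\gcd(e,p(p-1))=1$. The compositum $K_n=F_n(p^{1/e})$ therefore has degree divisible by both $e$ and $(p-1)p^{n-1}$, hence by their product. It is also at most that product, so $[K_n:F_n]=e$ for every $n$. Because $K_\infty=\bigcup_n K_n$ and the degrees $[K_n:F_n]$ are constant, the minimal polynomial of $p^{1/e}$ over $F_n$ is $X^e-p$ for all $n$. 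Hence $X^e-p$ stays irreducible over $F_\infty$, and $[K_\infty:F_\infty]=e$.

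Next we show that the residue field is $\mathbb{F}_p$. It suffices to check that $K_n/\mathbb{Q}_p$ is totally ramified for each $n$. The ramification index of $K_n$ over $\mathbb{Q}_p$ is divisible by the ramification index $e$ of $K$ and by the ramification index $(p-1)p^{n-1}$ of $F_n$, which is totally ramified. These two numbers are coprime, so the ramification index is divisible by $e(p-1)p^{n-1}=[K_n:\mathbb{Q}_p]$. Thus $K_n/\mathbb{Q}_p$ is totally ramified, its residue field is $\mathbb{F}_p$, and the residue field of the union $K_\infty$ is also $\mathbb{F}_p$.

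The only delicate point is passing from the finite layers to $F_\infty$, and constancy of the degrees handles it. There is no real obstacle; the key input throughout is the coprimality of $e$ with $p(p-1)$, used both for degrees and for ramification indices.
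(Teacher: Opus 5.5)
Your proof is correct and takes essentially the same route as the paper. You first get $[K_n:F_n]=e$ and total ramification of $K_n/\mathbb{Q}_p$ from the coprimality of $e$ with $p(p-1)$, and then pass to $F_\infty$ by observing that a factorization over $F_\infty$ would already occur over some $F_n$. Your divisibility arguments for degrees and ramification indices just make explicit the step the paper states tersely as ``$K_n/F_n$ is totally ramified of degree $e$.''
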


\begin{proof}
Write $F_n=\mathbb{Q}_p(\zeta_{p^n})$ and $K_n=\mathbb{Q}_p(\zeta_{p^n},p^{1/e})$. Then $F_n$ has residue field $\mathbb{F}_p$, and $F_n/\mathbb{Q}_p$ is totally ramified of degree $p^{n-1}(p-1)$. As $\gcd(e,p(p-1))=1$, it follows $K_n/F_n$ is totally ramified of degree $e$ and thus the residue field of each $K_n$ is $\mathbb{F}_p$. The same then follows for $K_\infty$.

To see $[K_\infty:F_\infty]=e$, since $[K_n:F_n]=e$ it follows $[K_\infty:F_\infty] \le e$. To see it is equal to $e$, observe $K_\infty = F_\infty(p^{1/e})$, so we must show the minimal polynomial of $p^{1/e}$ over $F_\infty$ has degree $\ge e$. The minimal polynomial belongs to $F_n[X]$ for some $n\geq 1$. As $[K_n:F_n]=e$ it must have degree at least $e$. 
\end{proof}

Recall that $\AF= \left(\mathbb{Z}_p[[\mu]][1/\mu]\right)^\wedge_p$, where $\mu=[\epsilon]-1$.
\begin{lemma}\label{lem:field_norms_tame}
Let $K=\mathbb{Q}_p(p^{1/e})$ where $\gcd(e,p(p-1))=1$ and $e>1$. Then we have
\[\mathbb{A}_K = \left(\mathbb{Z}_p[[\mu_K]][1/\mu_K]\right)^{\wedge}_p\]
for some $\mu_K\in W(\widehat{K_\infty}^\flat)$ and $\mathbb{A}_K/p=\mathbb{F}_p((\pi_K))$ with $\mu_K^e = \mu$ and $\pi_K=\overline{\mu_K}$. Moreover, 
\[\varphi(\mu_K)^e=(1+\mu_K^e)^p-1.\]
\end{lemma}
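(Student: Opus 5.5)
The plan is to construct $\mu_K$ directly as an $e$-th root of $\mu$ in $W(\widehat{K_\infty}^\flat)$, and then to check that it generates $\AK$ via the characterization of $\AK$ recalled earlier: a subring of $W(\mathbb{C}_p^\flat)$ that is a Cohen ring of $\mathbb{E}_K$, with $\AK/p\cong\mathbb{E}_K$, containing $\AF$ and étale over it.

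\emph{Step 1: an $e$-th root of $\epsilon-1$ in $\mathbb{E}_K$.} By \cref{lem:deg_e}, $\mathbb{E}_K/\mathbb{E}_F$ has degree $e$ and residue field $\mathbb{F}_p$, so it is totally ramified of degree $e$. Since $\gcd(e,p)=1$, this extension is tamely ramified. Write $\mathbb{E}_K=\mathbb{F}_p((\varpi))$ with $\varpi$ a uniformizer. Then $\epsilon-1=u\varpi^e$ for some unit $u\in\mathbb{F}_p[[\varpi]]^\times$. Write $u=c\cdot u_1$ with $c\in\mathbb{F}_p^\times$ and $u_1\equiv1$ modulo $\varpi$. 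Since $p\nmid e$, the principal unit $u_1$ has an $e$-th root by Hensel's lemma. Since $\gcd(e,p-1)=1$, the map $c\mapsto c^e$ is bijective on $\mathbb{F}_p^\times$, so $c$ is also an $e$-th power. Hence $\epsilon-1=\pi_K^e$ for some $\pi_K\in\mathbb{E}_K$, and $\pi_K$ is a uniformizer. It follows that $\mathbb{E}_K=\mathbb{F}_p((\pi_K))$, and $X^e-(\epsilon-1)$ is the (Eisenstein) minimal polynomial of $\pi_K$.

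\emph{Step 2: lifting.} Apply Hensel's lemma in $W(\widehat{K_\infty}^\flat)$ to $P(X)=X^e-\mu\in\AFp[X]$. Its reduction has the simple root $\pi_K$, because $\overline{P}'(\pi_K)=e\pi_K^{e-1}\neq0$ ($e$ is invertible). This produces $\mu_K$ with $\mu_K^e=\mu$ and $\overline{\mu_K}=\pi_K$. This is exactly the construction of $\AK$ given earlier in the paper, with the lift $P$ and $k_{K_\infty}=\mathbb{F}_p$. Therefore $\AK$ is the image of $\left(\mathbb{Z}_p[[T]][1/T]\right)^\wedge_p$ under $T\mapsto\mu_K$, and $\AK/p=\mathbb{F}_p((\pi_K))$. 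One point needs checking: $\mu_K$ actually lies in $W(\mathcal{O}^\flat_{\widehat{K_\infty}})$, so that $T\mapsto\mu_K$ extends to power series. I expect this to be the only delicate step. It follows because the Hensel iteration can be run inside $\AFp[\mu_K]$-type rings. Alternatively, $\mu_K$ is integral over $A_\inf\cap W(\widehat{K_\infty}^\flat)$, and the latter ring is integrally closed in $W(\widehat{K_\infty}^\flat)$ (check modulo $p$ via valuations). Uniqueness of the Hensel root shows that the resulting ring agrees with the given $\AK$.

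\emph{Step 3: the Frobenius identity.} Frobenius preserves $\AK$, and $\varphi(\mu)=(1+\mu)^p-1$. So $\varphi(\mu_K)$ is a root of $X^e-((1+\mu_K^e)^p-1)$. The right-hand side factors as $\mu_K^{pe}\cdot w$, where $w\equiv1$ modulo $p$; this holds because modulo $p$ we get $(1+\pi_K^e)^p-1=\pi_K^{pe}$. Since $p\nmid e$, this polynomial is separable modulo $p$ with simple root $\pi_K^p=\varphi(\pi_K)$. By Hensel uniqueness in $\AK$ (a complete DVR with residue field $\mathbb{E}_K$), the root reducing to $\pi_K^p$ is unique. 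Since $\varphi(\mu_K)$ reduces to $\pi_K^p$, we get $\varphi(\mu_K)^e=(1+\mu_K^e)^p-1$. In fact this identity also follows directly by applying $\varphi$ to $\mu_K^e=\mu$, so Step 3 is immediate.
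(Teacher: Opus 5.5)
Your route is the paper's own. You normalize the uniformizer so that $\pi_K^e=\epsilon-1$ using $\gcd(e,p(p-1))=1$, Hensel-lift $X^e-\mu$ to $\mu_K\in W(\widehat{K_\infty}^\flat)$, feed this into the construction of $\AK$, and obtain the Frobenius identity by applying $\varphi$ to $\mu_K^e=\mu$.

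However, the step you flag as ``the only delicate step'' is false: $\mu_K\notin W(\mathcal{O}^\flat_{\widehat{K_\infty}})$. Indeed, $k_{K_\infty}=\mathbb{F}_p$ by \cref{lem:deg_e}, so \cref{thm:main theorem} gives $\AKp=\mathbb{Z}_p[[\mu]]$. If $\mu_K\in A_\inf$, then $\mu_K\in\AKp$, and so $\pi_K=\overline{\mu_K}\in\mathbb{F}_p[[\epsilon-1]]$. This is impossible, because $v(\pi_K)=v(\epsilon-1)/e$ with $e>1$.

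You can also see this by hand. Write $\mu=[\epsilon-1]+py$ and $\mu_K=[\pi_K]+pz$. Comparing $\mu_K^e=\mu$ modulo $p^2$ gives $\bar y=e\pi_K^{e-1}\bar z$. Moreover $v(\bar y)=v(\epsilon-1)/p$, because the Witt component $a_1$ of $[\epsilon]-1=(a_0,a_1,\dots)$ has $v(a_1)=v(\epsilon-1)$ and $\bar y=a_1^{1/p}$. Hence $v(\bar z)=v(\epsilon-1)\bigl(\tfrac1p+\tfrac1e-1\bigr)<0$, so $\mu_K\notin A_\inf$.

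Consequently both justifications you sketch fail. In particular, $W(\mathcal{O}^\flat_{\widehat{K_\infty}})$ is \emph{not} integrally closed in $W(\widehat{K_\infty}^\flat)$, and $\mu_K$, a root of $X^e-\mu$, is a counterexample. This non-integrality is precisely the phenomenon the paper is about.

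Fortunately the claim is not needed. To extend $T\mapsto\mu_K$ to $\bigl(\mathbb{Z}_p[[T]][1/T]\bigr)^\wedge_p$, you only need two facts:
\begin{enumerate}
\item $\mu_K$ is a unit in $W(\widehat{K_\infty}^\flat)$. This holds since $\pi_K\neq 0$ and $W(\widehat{K_\infty}^\flat)$ is a discrete valuation ring with uniformizer $p$.
\item $\mu_K^n\to0$ for the weak topology of $W(\widehat{K_\infty}^\flat)$. This holds because $\mu_K\equiv[\pi_K]\pmod p$ with $v(\pi_K)>0$. Modulo $p^m$ one has $\mu_K^n=\sum_{k<m}\binom nk[\pi_K]^{n-k}p^kz^k$, and each of these finitely many terms tends to $0$, since $[a]\cdot(x_0,x_1,\dots)=(ax_0,a^px_1,\dots)$.
\end{enumerate}
This is what the paper's ``in the same manner as before'' has to mean in the ramified case. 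With this replacement, your Steps 1--3 coincide with the paper's proof. As you note yourself, the Hensel-uniqueness detour in Step 3 is superfluous.
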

\begin{proof}
Let $\mathbb{E}_{\mathbb{Q}_p}=\mathbb{F}_p((\varepsilon-1))$ and $\mathbb{E}_K$ be the fields of norms of $F_\infty/\mathbb{Q}_p$ and $K_\infty/K$ in the notation of Lemma \ref{lem:deg_e}. By the Fontaine--Wintenberger theorem (\cite[Th\'{e}or\`{e}me 3.1.6]{Fontaine1990}, \cite{Wintenberger1983}) and Lemma \ref{lem:deg_e}, the extension $\mathbb{E}_K/\mathbb{E}_{\mathbb{Q}_p}$ is finite separable, totally ramified, and of degree $e$ (note $K_\infty$ and $\mathbb{E}_K$ have the same residue field under the correspondence). We now identify the extension.

Let $\pi_K'\in \mathbb{E}_{K}^+$ be an arbitrary uniformizer. Then, there exists a unit $u\in (\mathbb{E}_K^+)^\times$ such that $(\pi_K')^e u= \epsilon-1$ in $\mathbb{E}_K^+$. Consider a $1$-variable polynomial $X^e-u$ in $\mathbb{E}_{K}^+[X]$. Recall that the residue field of $\mathbb{E}_K^+$ is $\mathbb{F}_p$. Since $u$ is a unit, its image $\overline{u}\in \mathbb{F}_p$ is also a unit. As $\gcd(e, p-1)=1$, $\overline{u}$ has an $e$th root in $\mathbb{F}_p$. Since $\gcd(e,p)=1$, we can apply Hensel's lemma to $X^e-u$ and obtain a root $w\in \mathbb{E}_K^+$. Then, $\pi_K:=  \pi_K'w\in \mathbb{E}_K^+$ is a uniformizer satisfying $\pi_K^e=\epsilon-1$. By the construction of $\mathbb{A}_K$ described after \cref{lem:main thm for unramified extension}, we see that $\mathbb{A}_K = \left(\mathbb{Z}_p[[\mu_K]][1/\mu_K]\right)^{\wedge}_p$ where $\mu_K\in W(\widehat{K_\infty}^\flat)$ satisfying $\mu_K^e = \mu$.

Finally, on $\mathbb{A}_{\mathbb{Q}_p}$ the Frobenius is $\varphi(\mu)=(1+\mu)^p-1$. Since $\mu=\mu_K^e$, its extension to $\mathbb{A}_K$ satisfies 
\[\varphi(\mu_K)^e=\varphi(\mu_K^e)=\varphi(\mu)=(1+\mu)^p-1=(1+\mu_K^e)^p-1.\] 
\end{proof}

We may now deduce the desired claim from this description of $\mathbb{A}_K$.

\begin{proposition}\label{prop:Deylan Pentland}
Let $K=\mathbb{Q}_p(p^{1/e})$ where $\gcd(e,p(p-1))=1$ and $e>1$. Then there is no subring $R$ of $\mathbb{A}_K$ which is stable under the Frobenius map and with the property that 
\[R/p \simeq \mathbb{E}_K^+ \subset \mathbb{E}_K\] 
via the natural map $R/p \rightarrow \mathbb{A}_K/p=\mathbb{E}_K$.
\end{proposition}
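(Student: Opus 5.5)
Suppose for contradiction that such an $R$ exists. I will use the explicit description of \cref{lem:field_norms_tame}: $\mathbb{A}_K=(\mathbb{Z}_p[[\mu_K]][1/\mu_K])^\wedge_p$, $\mathbb{E}_K=\mathbb{F}_p((\pi_K))$, $\mathbb{E}_K^+=\mathbb{F}_p[[\pi_K]]$, with $\varphi(\mu_K)^e=(1+\mu_K^e)^p-1$. The plan has three steps: first pin down $R$ as an explicit subring of $\mathbb{A}_K$, then test whether $\varphi(\mu_K)$ lies in it, and finally derive a congruence contradiction from the Frobenius relation.

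\textbf{Structure of $R$.} Since $R\subseteq \mathbb{A}_K$ and $\mathbb{A}_K$ is $p$-torsion free, $R$ is $p$-torsion free. The map $R/p\to\mathbb{E}_K$ is injective with image $\mathbb{F}_p[[\pi_K]]$.

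I first replace $R$ by its $p$-adic closure $\widehat R$ in $\mathbb{A}_K$. This closure is still a Frobenius-stable subring. It still satisfies $\widehat R/p\cong\mathbb{E}_K^+$: the key point is $\widehat R\cap p\mathbb{A}_K=p\widehat R$, which follows from $R\cap p\mathbb{A}_K=pR$ by a successive approximation argument.

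Now pick $x\in\widehat R$ lifting $\pi_K$. By completeness, $\widehat R=\mathbb{Z}_p[[x]]$: every element can be written as a power series in $x$ by successively subtracting lifts modulo $p$, so $\mathbb{Z}_p[[x]]\to\widehat R$ is surjective. It is injective modulo $p$, hence injective. Also $x=\mu_K+pa$ for some $a\in\mathbb{A}_K$. Note that $x$ is then a uniformizer-type variable, and $\mathbb{A}_K$ coincides with $(\mathbb{Z}_p[[x]][1/x])^\wedge_p$ as well.

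\textbf{Frobenius test.} Frobenius stability gives $\varphi(x)=G(x)$ for some $G\in\mathbb{Z}_p[[X]]$, with $G(X)\equiv X^p \pmod p$.

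Write $\mu=\mu_K^e$; expressed in the variable $x$, this is an element of $\mathbb{A}_K$ congruent to $x^e$ modulo $p$. Then
\[
\varphi(\mu_K)^e=(1+\mu_K^e)^p-1 .
\]
Reducing modulo $p^2$, the right-hand side is $\mu_K^{ep}+p(\mu_K^e+\cdots+\mu_K^{e(p-1)})$ up to binomial coefficients, namely $\sum_{i=1}^{p-1}\binom{p}{i}\mu_K^{ei}$. I write $\varphi(\mu_K)=\varphi(x)-p\varphi(a)$ and compare expansions modulo $p^2$.

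The idea, following the remark after Theorem 1.3 of \cite{Berger2014}, is that the term $p\mu_K^e$ forces
\[
\varphi(\mu_K)=\mu_K^p\Bigl(1+\tfrac{p}{e}\mu_K^{e(1-p)}+\cdots\Bigr) \pmod{p^2}.
\]
This contains the negative power $\mu_K^{e-ep+p}$, whose exponent is negative when $e\ge2$. The correction terms $p\varphi(a)$ and $pG_1(x)$ (where $G=X^p+pG_1$) must absorb this negative power. I will try to show that this is incompatible with $a$ being arbitrary: one relates the $\varphi(a)$ and $a$ terms via $\varphi(\bar a)=\bar a^p$ in characteristic $p$, where $\bar a\in\mathbb{F}_p((\pi_K))$. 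This yields an equation in $\mathbb{E}_K$ of the form
\[
\bar a^p\cdot(\text{unit})-p\text{-coefficient terms}=\tfrac{1}{e}\pi_K^{p+e-ep}+\overline{G_1}(\pi_K)\quad\text{up to derivative terms}.
\]
The derivative terms arise as $x^{p}=\mu_K^p+p\,p\mu_K^{p-1}a\equiv\mu_K^p$, so they actually vanish modulo $p^2$; this simplifies the equation.

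Concretely, I expect the reduced relation to be
\[
\frac{1}{e}\pi_K^{\,p+e-ep}=\bar a^p-\ldots+\overline{G_1}(\pi_K),
\]
with $\overline{G_1}\in\mathbb{F}_p[[\pi_K]]$.

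\textbf{Main obstacle: the congruence contradiction.} The contradiction should come from comparing terms of negative order. A $p$-th power $\bar a^p$ only has exponents divisible by $p$. One must show that the exponent $p+e-ep$ is not divisible by $p$; indeed it is congruent to $e$ modulo $p$, which is nonzero because $p\nmid e$. One must also show that this exponent is negative and cannot be cancelled by any other term.

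The main obstacle is the careful bookkeeping of all the other terms that could contribute to the exponent $p+e-ep$, for instance $\bar a^{\,j}$ terms arising from the expansion of $e\mu_K^{e-1}\cdot(\ldots)$. I would first handle this by a valuation argument: I expect that only exponents divisible by $p$, or nonnegative ones, can occur besides the term $\tfrac1e\pi_K^{p+e-ep}$. If that holds, no $a$ can make the equation balance, which gives the contradiction.
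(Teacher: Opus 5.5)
Your strategy is the paper's: lift $\pi_K$ to $x=\mu_K+pa\in R$, use $R\cap p\mathbb{A}_K=pR$ to see that $\delta(x)=(\varphi(x)-x^p)/p$ lies in $R$ and reduces into $\mathbb{E}_K^+$, note $x^p\equiv\mu_K^p \pmod{p^2}$, and play the exponent $p+e-ep$ (negative, and $\equiv e\not\equiv 0 \bmod p$) against $\bar a^{\,p}$. However, as written the argument stops at its decisive step. You leave the reduced relation as ``$\frac1e\pi_K^{p+e-ep}=\bar a^p-\ldots+\overline{G_1}$'', and the contradiction is made conditional on an expectation. That expectation is that, besides this term, only exponents divisible by $p$ or nonnegative exponents occur. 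As stated, it is false: the remaining terms of $\delta(\mu_K)$ also have negative exponents prime to $p$. For $e=5$, $p=3$ one gets both $\pi_K^{-7}$ and $\pi_K^{-2}$.

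What closes the proof is the exact identity, and it contains no further terms involving $\bar a$. From $\varphi(x)=\varphi(\mu_K)+p\varphi(a)$ and $x^p\equiv\mu_K^p \pmod{p^2}$ you get $\delta(x)\equiv\delta(\mu_K)+\varphi(a)\pmod{p\mathbb{A}_K}$. Comparing $(\mu_K^p+p\delta(\mu_K))^e\equiv\mu_K^{pe}+ep\mu_K^{p(e-1)}\delta(\mu_K) \pmod{p^2}$ with $(1+\mu_K^e)^p-1$ then gives
\[
\overline{\delta(x)}=\bar a^{\,p}+e^{-1}\sum_{i=1}^{p-1}\tfrac1p\tbinom{p}{i}\,\pi_K^{\,ei-p(e-1)} \quad\text{in } \mathbb{E}_K.
\]
The exponents $ei-p(e-1)$ are pairwise distinct and $\equiv ei\not\equiv 0 \pmod p$, while $\bar a^{\,p}$ only involves exponents divisible by $p$. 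So the coefficient of $\pi_K^{p+e-ep}$ (the $i=1$ term) in $\overline{\delta(x)}$ is exactly $e^{-1}\neq 0$. Since $p+e-ep<0$, this contradicts $\overline{\delta(x)}\in\mathbb{F}_p[[\pi_K]]$. One coefficient comparison suffices, so the ``bookkeeping obstacle'' you describe does not actually arise.

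Separately, your structural step $\widehat R=\mathbb{Z}_p[[x]]$ is not justified by the argument you give. $\widehat R$ is only $p$-adically complete, and a series $\sum c_nx^n$ does not converge $p$-adically, so ``successively subtracting lifts'' does not produce an expansion inside $\widehat R$. You also do not say in which topology $\mathbb{Z}_p[[x]]$ is meant to sit inside $\mathbb{A}_K$. Fortunately none of this is needed: not the passage to $\widehat R$, not the power series description, and not the claim $\mathbb{A}_K=(\mathbb{Z}_p[[x]][1/x])^\wedge_p$. The only thing you use about $G_1$ is that $G_1=\delta(x)$ lies in $R$ with reduction in $\mathbb{E}_K^+$. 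That follows directly from $\varphi(x)-x^p\in R\cap p\mathbb{A}_K=pR$ and the $p$-torsion-freeness of $\mathbb{A}_K$.
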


\begin{proof}
Assume for the sake of contradiction that such a subring $R$ exists. We assume $R/p \simeq \mathbb{E}_K^+ \subset \mathbb{E}_K$, so it must be the case that $R \cap p \mathbb{A}_K = p R$ (in particular the subring is $p$-saturated in $\mathbb{A}_K$). Indeed, by assumption the natural map $R/p \to \mathbb{A}_K/p$ is injective, which means if $x\in R$ and $x\in p \mathbb{A}_K$ then $x\in p R$. Thus $R \cap p \mathbb{A}_K \subset pR$. The reverse inclusion is clear, so $R \cap p \mathbb{A}_K = p R$. In what follows, we use the notation of Lemma \ref{lem:field_norms_tame} to identify $\mathbb{E}_K$ and $\mathbb{A}_K$.

Since $\pi_K$ is in $\mathbb{E}_K^+$, there is a lift $x=\mu_K + pf \in R$ where $f\in \mathbb{A}_K$. Since we assumed $R/p \cong \mathbb{E}_K^+ \subseteq \mathbb{E}_K \subseteq \widehat{K_\infty}^\flat$, we have $p W(\widehat{K_\infty}^\flat) \cap R= p R$ by a similar argument as above. As $\varphi\colon W(\widehat{K_\infty}^\flat) \rightarrow W(\widehat{K_\infty}^\flat)$ is a Frobenius lift and we assumed that $R$ is stable under the Frobenius map, it follows that $\varphi(r)-r^p\in p W(\widehat{K_\infty}^\flat) \cap R=p R$ for all $r\in R$.
Since $R \subseteq \mathbb{A}_K$ is $p$-torsion free, $\delta(r) = (\varphi(r)-r^p)/p$ is in $R$ for all $r\in R$, i.e., we get $\delta(-) = (\varphi(-)-(-)^p)/p$ as a $\delta$-ring structure on the subring $R$. 
It must be the case that $\delta(x)$ reduces modulo $p\mathbb{A}_K$ to an element of $\mathbb{E}_K^+=\mathbb{F}_p[[\pi_K]]$, as $\delta(x)\in R$.

By the construction of $\delta(\mu_K)$ we have $\varphi(\mu_K)=\mu_K^p + p \delta(\mu_K)$, and thus expanding we obtain 
\[\varphi(\mu_K)^e \equiv \mu_K^{pe} + ep \delta(\mu_K) \mu_K^{p(e-1)} \pmod{p^2\mathbb{A}_K}.\] 
We also have $\varphi(\mu_K)^e = (1+\mu_K^e)^p-1$ by Lemma \ref{lem:field_norms_tame}, so expanding this as well we arrive at
\begin{align*}
    \delta(\mu_K) \equiv e^{-1} \sum_{i=1}^{p-1} \frac{1}{p} \binom{p}{i} \mu_K^{ei-p(e-1)} \pmod{p\mathbb{A}_K}.
\end{align*}  
The $i=1$ term in the sum contributes $e^{-1} \mu_K^{e-p(e-1)}$, which has a $\mu_K$-exponent that is $< 0$ when $e>1$ and $\gcd(e,p)=1$ (as we assume). We obtain $x^p \equiv \mu_K^p \bmod p^2\mathbb{A}_K$ by expanding $x$ as $x=\mu_K + pf$, so that 
\begin{align*}
    \delta(x) = \frac{\varphi(x)-x^p}{p} \equiv \frac{\varphi(\mu_K)+p \varphi(f) - \mu_K^p}{p} = \delta(\mu_K) + \varphi(f) \pmod{p\mathbb{A}_K}.
\end{align*} 

Thus modulo $p\mathbb{A}_K$, the $i=1$ term in the identity $\delta(\mu_K) \equiv e^{-1} \sum_{i=1}^{p-1} p^{-1} \binom{p}{i} \mu_K^{ei-p(e-1)} \pmod{p\mathbb{A}_K}$ still contributes a negative exponent not divisible by $p$. We also note that the coefficient of the $i=1$ term is $e^{-1}$, and in particular nonzero. 

This term cannot be canceled by $\varphi(f)$ modulo $p\mathbb{A}_K$ as this can only have exponents divisible by $p$ (by virtue of living in the image of $\varphi: \mathbb{F}_p((\pi_K)) \rightarrow \mathbb{F}_p((\pi_K))$ sending $\pi_K \rightarrow \pi_K^{p}$). It follows that $\delta(x)$ modulo $p \mathbb{A}_K$ is not an element of $\mathbb{E}_K^+=\mathbb{F}_p[[\pi_K]]$, which is a contradiction.
\end{proof}

\bibliographystyle{halpha}
\bibliography{A_Kplus}
\end{document}